\DeclareSymbolFont{cyrletters}{OT2}{wncyr}{m}{n}
\DeclareMathSymbol{\Sha}{\mathalpha}{cyrletters}{"58}
\DeclareMathSymbol{\Zhe}{\mathalpha}{cyrletters}{17}
\theoremstyle{plain}
\newtheorem{theorem}{Theorem}[section]
\newtheorem{lemma}[theorem]{Lemma}
\newtheorem{proposition}[theorem]{Proposition}
\newtheorem{question}[theorem]{Question}
\newtheorem{conjecture}[theorem]{Conjecture}
\newtheorem*{theorem*}{Theorem}
\newtheorem*{problem*}{Problem}
\newtheorem*{question*}{Question}
\newtheorem{theoremintro}{Theorem}
\theoremstyle{definition}
\newtheorem{definition}[theorem]{Definition}
\newtheorem{remark}[theorem]{Remark}
\numberwithin{equation}{section}
\renewcommand{\O}{{\mathcal O}}
\renewcommand{\hom}{\operatorname{Hom}}
\newcommand{\real}{{\mathbb R}}
\newcommand{\Z}{{\mathbb Z}}
\newcommand{\op}[1]{\operatorname{#1}}
\newcommand{\Ob}{\operatorname{Ob}}
\newcommand{\gm}{\mathbb{G}_{m}}
\newcommand{\Br}{\operatorname{Br}}
\newcommand{\GL}{\operatorname{GL}}
\newcommand{\Hom}{\operatorname{Hom}}
\newcommand{\Aut}{\operatorname{Aut}}
\newcommand{\bbP}{\mathbb{P}}
\newcommand{\leftexp}[2]{{\vphantom{#2}}^{#1}{#2}}
\newcommand{\weezer}{\leftexp{=}{\kern-0.23em\mathsf{W}}^{\kern-0.21em =}}
\newcommand*{\sheafhom}{\mathrm{H}\kern -.5pt om} 
\newcommand{\cd}[1]{\mathscr{D}(#1)}
\newcommand{\twist}[2]{{}^{#2} \kern -2.5pt #1}
\newcommand{\ICP}{\Zhe}
\begin{document}

\title[Consequences of Exceptional Collections in Arithmetic and Rationality]{Consequences of the Existence of Exceptional Collections in Arithmetic and Rationality}

\author[Ballard]{Matthew R Ballard}
\address{Department of Mathematics, University of South Carolina, 
Columbia, SC 29208}
\email{ballard@math.sc.edu}
\urladdr{\url{http://www.matthewrobertballard.com}}

\author[Duncan]{Alexander Duncan}
\address{Department of Mathematics, University of South Carolina, 
Columbia, SC 29208}
\email{duncan@math.sc.edu}
\urladdr{\url{http://people.math.sc.edu/duncan/}}

\author[Lamarche]{Alicia Lamarche}
\address{Yau Mathematical Sciences Center, Tsinghua University, 
Beijing, China 100084}
\email{lamarche@mail.tsinghua.edu.cn}
\urladdr{\url{http://www.alicialamarche.com/}}

\author[McFaddin]{Patrick K. McFaddin}
\address{Department of Mathematics, Fordham University, 
New York, NY 10023}
\email{pkmcfaddin@gmail.com}
\urladdr{\url{http://mcfaddin.github.io/}}

\begin{abstract}
  A well-known conjecture of Orlov asks whether the existence of a full exceptional collection implies rationality of the underlying variety. We prove this conjecture for arithmetic toric varieties over general fields. We also investigate a slight generalization of this conjecture, where the endomorphism algebras of the exceptional objects are allowed to be separable field extensions of the base field. We show this generalization is false by exhibiting a geometrically rational, smooth, projective threefold over the the field of rational numbers that possesses a full \'etale-exceptional collection but not a rational point. The counterexample comes from twisting a non-retract rational variety with a rational point and full \'etale-exceptional collection by a torsor that is invisible to Brauer invariants. Along the way, we develop some tools for linearizing objects, including a group that controls linearizations.
\end{abstract}

\maketitle
\addtocounter{section}{0}



\section{Introduction}

Developments over the past forty years have established derived categories of coherent sheaves as a versatile language for capturing deep but obscure geometric connections between algebraic varieties. Central to these investigations has been the tie between rationality questions and derived categories. A basic motivating question is the following: 

\begin{question} To what extent can the derived category be used as tool to understand the rationality of a variety? 
\end{question}

Examples in low dimension provide some insight. For a smooth projective curve $C$ over a field $k$, the bounded derived category $\op{D^b}(C) = \op{D^b}( \op{coh} C)$ of coherent sheaves on $C$ admits a full $k$-exceptional (or \'etale-exceptional; see Definition~\ref{def:exceptional}) collection if and only if $C \cong \mathbb{P}^1_k$. 

Over a perfect field $k$, the derived category of a smooth rational projective surface always has a full \'etale-exceptional collection, though not a full $k$-exceptional collection in general. This follows from the classification of minimal rational surfaces; see for example \cite{MTsurvey}, and a case-by-case analysis for del Pezzo varieties \cite{AB}.

It is expected that rationality of $X$ should guarantee that $\op{D^b}(X)$ admits a semi-orthogonal decomposition into components that are not ``too complicated''. The structure of derived categories of Fano threefolds over $\mathbb{C}$ provides evidence for this belief \cite{Kuzsurvey}. Kuznetsov's conjecture on the rationality of a cubic fourfold also follows along this general belief \cite{Kuz4fold}.

In the other direction, Vial showed that any geometrically rational smooth projective surface with a full (numerical) $k$-exceptional collection is $k$-rational \cite{Vial}. Brown and Shipman showed that 
a smooth complex projective surface with a full strong exceptional collection of line bundles is rational \cite{BS}. 

More generally, a conjecture of Orlov states that a smooth projective variety with a full exceptional collection is rational. Even stronger, Lunts conjectures that over a general field $k$, a full $k$-exceptional collection for $X$ implies that $X$ admits a locally-closed stratification into subvarieties that are each $k$-rational \cite{ElaginLunts}. Orlov's Conjecture has been settled affirmatively for surfaces of small Picard rank which admit exceptional collections of line bundles \cite{Zhang}.

This article focuses on Orlov's Conjecture, both in its original form and a slight generalization. The first main result is that Orlov's Conjecture is true for toric varieties over general fields $k$ (also called \emph{arithmetic toric varieties}). Unlike the case of $k = \mathbb{C}$, such varieties need not be rational, nor even retract rational, in general. 

\begin{theoremintro} \label{theorem:FEC_rat}
  Let $X$ be a smooth projective toric variety over a field $k$ with $X(k) \neq \emptyset$. If $\op{D^b}(X)$ has a full $k$-exceptional collection then $X$ is $k$-rational. 
\end{theoremintro}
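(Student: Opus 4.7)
The plan is to extract Galois-theoretic information from the full $k$-exceptional collection, translate this into structural constraints on the torus acting on $X$, and apply rationality criteria for tori in the spirit of Voskresenskii.

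First, I would exploit the following elementary observation: if $E_1,\ldots,E_n$ is a full $k$-exceptional collection, then the classes $[E_i]$ form a $\mathbb{Z}$-basis of $K_0(X)$, and after base change the classes $[E_i \otimes_k \bar{k}]$ give a $G_k$-invariant $\mathbb{Z}$-basis of $K_0(X_{\bar{k}})$. Hence the Galois action on $K_0(X_{\bar{k}})$ is trivial. Via the topological filtration on $K_0$ whose associated graded pieces compute the Chow groups (particularly well-behaved for a smooth projective toric variety, where the Chow groups are free and generated by classes of torus-invariant cycles), this triviality propagates to each $\chow^i(X_{\bar{k}})$ as a $G_k$-module. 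In particular, $\Pic(X_{\bar{k}})$ is a trivial $G_k$-module.

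Next, I would bring in the combinatorics of the fan. With $T$ the torus acting on $X$, $M = \hat{T}$ its character lattice, and $\Sigma(1)$ the set of rays of the fan of $X_{\bar{k}}$, the standard short exact sequence
\[
0 \to M \to \mathbb{Z}^{\Sigma(1)} \to \Pic(X_{\bar{k}}) \to 0
\]
is $G_k$-equivariant, the middle term is a permutation module, and the right term is now known to be trivial. Analogous sequences involving higher-codimension orbit closures, together with triviality of the higher Chow groups as $G_k$-modules, give further constraints on the combinatorial data of $\Sigma$. The hypothesis $X(k) \neq \emptyset$ provides an additional input: either a torus-fixed $k$-point, corresponding to a $G_k$-stable maximal cone, or a $k$-point on the dense orbit, trivializing the underlying $T$-torsor. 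I would combine these with the linearization machinery developed later in the paper (as announced in the abstract) to deduce that $M$ is a direct summand of a permutation $G_k$-module, equivalently an invertible $G_k$-lattice. By the Voskresenskii/Colliot-Th\'el\`ene--Sansuc criterion, this means $T$ is retract $k$-rational, and for a smooth projective toric variety with a rational point one can then upgrade to $k$-rationality of $X$ itself, since the open torus orbit is dense in $X$.

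The main obstacle I foresee is the passage from triviality of $\Pic(X_{\bar{k}})$ (and of higher Chow groups) to invertibility of $M$. Triviality of $\Pic$ alone is insufficient: the augmentation ideal arising from a cyclic Galois permutation of the rays of $\mathbb{P}^2$ produces a non-permutation $M$ despite having trivial Picard quotient. Resolving this requires refined equivariant data beyond the numerical level, precisely the kind of Galois-equivariant control that the paper's linearization framework is designed to supply. A secondary, more technical step is upgrading rationality of $T$ to rationality of $X$, which requires some care to handle the compactification and to ensure the rational point lies in, or can be transported into, a suitably nice locus.
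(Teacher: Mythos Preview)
Your opening is exactly the paper's: a full $k$-exceptional collection forces the $\Gamma$-action on $K_0(\overline{X})$ to be trivial, and surjectivity onto $\Pic(\overline{X})$ (the paper simply uses $\det$; your topological filtration is overkill but fine) makes $\Pic(\overline{X})$ a trivial $\Gamma$-module.

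After that you diverge, and the divergence is where the gap lies. You aim to show that $M=\widehat{T}$ is an invertible lattice and then appeal to Colliot-Th\'el\`ene--Sansuc for retract rationality. You correctly notice that triviality of $\Pic(\overline{X})$ does \emph{not} force $M$ to be invertible: your $C_3$-on-$\mathbb{P}^2$ example is genuine, since $H^1(C_3,I_{C_3})\cong\mathbb{Z}/3$, so $I_{C_3}$ is not coflasque and hence not invertible. But your proposed patch --- invoking the linearization machinery from Section~\ref{section:linearizing_stuff} --- does not help here. That machinery produces equivariant structures on objects of $\mathrm{D}^{\mathrm b}(X)$ in order to twist exceptional collections by torsors; it says nothing about invertibility of $\widehat{T}$ and is not used in the proof of this theorem at all.

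The paper avoids the obstacle entirely. The point is that $\Pic(\overline{X})$ is a \emph{trivial} lattice, not merely permutation, so the toric sequence dualizes to
\[
1 \;\longrightarrow\; \mathbb{G}_m^{\,r} \;\longrightarrow\; R_{E/k}\mathbb{G}_m \;\longrightarrow\; T \;\longrightarrow\; 1
\]
with a \emph{split} kernel. A theorem of Voskresenski\u{\i} says precisely that a torus admitting such a presentation is $k$-rational outright --- no invertibility of $\widehat{T}$ is needed, and one gets rationality rather than retract rationality. In your $C_3$ example this recovers the classical fact that two-dimensional tori are rational, even though $I_{C_3}$ is not invertible. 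Finally, since $X(k)\ne\emptyset$ the open orbit is a trivial $T$-torsor, hence isomorphic to $T$; so $X$ is birational to $T$ and your second flagged obstacle is also not an issue.
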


We then turn to investigating a slight generalization of Orlov's Conjecture: if a geometrically irreducible variety $X$ possesses a full \'etale-exceptional collection, is it necessarily rational?
Here we say an object $E \in \op{D^b}(X)$ is \emph{\'etale-exceptional} if its endomorphism algebra is a separable field extension of $k$ (concentrated in degree 0). The following results show that the existence of a full \'etale-exceptional collection says very little about the rationality of a variety. 

\begin{theoremintro} \label{theorem:Kuznetsov_no}
 There exists a smooth threefold $X$ over $\mathbb{Q}$ that is
geometrically rational, has a $\mathbb{Q}$-point, and
whose derived category admits a semi-orthogonal decomposition 
 into derived categories of smooth points,
but $X$ is not $k$-rational.
\end{theoremintro}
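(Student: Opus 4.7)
The abstract outlines the overall strategy: realize $X$ as a Galois twist of a geometrically rational variety $Y$ by a cohomology class invisible to Brauer-theoretic invariants. My plan is to carry this out concretely in three main steps.

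First, I would produce a smooth projective threefold $Y/\mathbb{Q}$ that is geometrically rational, has a $\mathbb{Q}$-point, admits a full \'etale-exceptional collection, and is \emph{not} retract rational. By Theorem~\ref{theorem:FEC_rat} such a $Y$ cannot carry a full $\mathbb{Q}$-exceptional collection. Natural candidates are smooth toric compactifications of non-retract-rational algebraic tori over $\mathbb{Q}$---for instance, compactifications of $R^{(1)}_{K/\mathbb{Q}}\gm$ for an appropriate biquadratic or $S_3$-Galois extension $K/\mathbb{Q}$. These possess a dense split torus orbit giving a $\mathbb{Q}$-point, and Galois-equivariant packets of line bundles associated to torus-invariant divisors, grouped into Galois orbits, should assemble into the desired \'etale-exceptional structure.

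Second, I would exhibit a class $\beta \in H^1(\mathbb{Q},\Aut(Y_{\overline{\mathbb{Q}}}))$---or within a carefully chosen subgroup preserving the collection---with two competing properties: $\beta$ is nontrivial, yet its images in all relevant Brauer-type obstructions vanish (unramified Brauer, Brauer classes attached to strata of the exceptional collection, and Amitsur-type invariants of the endomorphism \'etale algebras). Setting $X := \twist{Y}{\beta}$, geometric rationality is automatic since $X_{\overline{\mathbb{Q}}} \cong Y_{\overline{\mathbb{Q}}}$. The $\mathbb{Q}$-point of $X$ should come from twisting a $\mathbb{Q}$-point of $Y$, its existence being controlled precisely by a Brauer class that $\beta$ is designed to kill. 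To transport the \'etale-exceptional collection from $Y$ to $X$, I would invoke the linearization machinery advertised in the abstract: the descent obstruction attached to each geometric exceptional object lives in a Brauer group that $\beta$ annihilates, so each object descends to $X$ with endomorphism algebra an \'etale $\mathbb{Q}$-algebra. Semi-orthogonality is a property of geometric Hom-spaces and survives the twist.

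The main obstacle, as I see it, is the final step: showing $X$ is not $\mathbb{Q}$-rational. Since $Y$ is not retract rational, there must be a birational invariant of $Y$ that is nontrivial and, moreover, lies beyond the reach of Brauer groups---otherwise $\beta$ would trivialize it as well. A concrete approach is to compute the Galois module $\Pic(X_{\overline{\mathbb{Q}}})$, which is isomorphic to $\Pic(Y_{\overline{\mathbb{Q}}})$ as an abelian group but carries a twisted Galois action, and to verify that its flasque resolution (or a higher unramified cohomology class derived from it) remains nontrivial on $X$. The delicate technical point is to design $Y$ and $\beta$ simultaneously so that $\beta$ annihilates every Brauer-valued obstruction---ensuring both the $\mathbb{Q}$-point and the descent of the collection---while leaving intact the finer birational invariant witnessing non-retract-rationality. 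This balancing act between what $\beta$ must kill and what it must preserve is where I expect the bulk of the combinatorial and cohomological work to lie.
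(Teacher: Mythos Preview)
Your step~1 already produces the variety required by Theorem~\ref{theorem:Kuznetsov_no}: a smooth projective toric compactification of $R^{(1)}_{K/\mathbb{Q}}\gm$ for a biquadratic $K/\mathbb{Q}$ is geometrically rational, has a $\mathbb{Q}$-point (the identity of the torus), is not retract rational (hence not $\mathbb{Q}$-rational), and---once one verifies the existence of a suitable Galois-stable collection on the split form---carries a full \'etale-exceptional collection. This is exactly what the paper does: its $X$ is the \emph{neutral} form, and no further twisting is performed for Theorem~\ref{theorem:Kuznetsov_no}. You have misread the role of the twist in the abstract. The torsor in $\ICP(\mathbb{Q},T)$ is used only for Theorem~\ref{theorem:BB_no}, where the goal is to \emph{destroy} the rational point while preserving the \'etale-exceptional collection; your $Y$ is the paper's $X$, and the paper's $Y$ is the twist of your $Y$.

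Consequently, your steps~2 and~3 are both unnecessary and flawed. The claim that a $\mathbb{Q}$-point of $Y$ survives a nontrivial twist because ``$\beta$ kills a Brauer class'' is backwards: for a $T$-torsor $U$, the open orbit of $\twist{Y}{U}$ is $U$ itself, so $\twist{Y}{U}$ has a $\mathbb{Q}$-point if and only if $U$ is trivial (Proposition~\ref{proposition:rational_points}). Invisibility to Brauer invariants does not produce rational points; it is used in the paper for the opposite purpose---to ensure that the endomorphism algebras of the twisted exceptional objects remain \'etale rather than becoming nontrivial Azumaya algebras. Finally, a minor gap in step~1: for the fan of $X(A_3)$ (the relevant split threefold), line bundles alone do not yield a full $\Aut(\Sigma)$-stable exceptional collection; the Castravet--Tevelev collection includes a block of torsion sheaves $i_\ast\mathcal{O}(-1,-1)$ supported on toric divisors, and one must check these descend as well (this is the content of the paper's notion of a TCI-type collection).
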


\begin{theoremintro} \label{theorem:BB_no}
 There exists a smooth  
 threefold $Y$ over $\mathbb{Q}$
that is geometrically rational, and
whose derived category admits a semi-orthogonal decomposition 
 into derived categories of smooth points,
 but $Y$ has no $\mathbb{Q}$-points
\end{theoremintro}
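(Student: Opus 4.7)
The plan is to obtain $Y$ as a twist of the variety $X$ produced in Theorem~\ref{theorem:Kuznetsov_no}. Geometric rationality is preserved under twisting, so the new content to establish is (a) that the semi-orthogonal decomposition into derived categories of smooth points descends from $X$ to a twist, and (b) that a suitably chosen twist eliminates all $\mathbb{Q}$-points of $X$ without introducing new ones.

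For (a), I would apply the linearization machinery advertised in the abstract. If $G$ is an algebraic group acting on $X$---most naturally a torus arising from the toric-style construction of $X$---then the cohomological ``group that controls linearizations'' developed earlier measures the obstruction to lifting an object of $\op{D^b}(X)$ to a $G$-equivariant object. The components of $\op{D^b}(X)$ equivalent to derived categories of smooth points are rigid enough that this obstruction should vanish or at worst be identified explicitly. Once each semi-orthogonal component of $X$ admits a $G$-linearization, any $G$-torsor $P \in H^1(\mathbb{Q},G)$ transports the SOD on $X$ to an SOD on $Y = \leftexp{P}{X}$ whose components remain derived categories of separable field extensions of $\mathbb{Q}$, which is exactly the conclusion required.

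For (b), one selects $P$ to lie in a subgroup of $H^1(\mathbb{Q},G)$ that is ``invisible to Brauer invariants''---for instance, in $\ker(H^1(\mathbb{Q},G) \to \Br(\mathbb{Q}))$ or in a Tate--Shafarevich subgroup $\Sha^1(\mathbb{Q},G)$---so that the extra obstructions to descending the SOD vanish, while simultaneously ensuring that no $\mathbb{Q}$-point of $X$ is fixed by the twisted Galois action. Concretely, a $\mathbb{Q}$-point of $Y$ corresponds to an $\overline{\mathbb{Q}}$-point $x$ of $X$ satisfying $\sigma \cdot x = p_\sigma \cdot x$ for the cocycle $p_\sigma$ representing $P$, so killing rational points reduces to verifying that no Galois-fixed point is compatible with the twisting cocycle on stabilizers.

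The main obstacle will be this simultaneous balancing act: one must engineer the group $G$ and its action so that $H^1(\mathbb{Q},G)$ contains a class that is Brauer-invisible yet still kills all rational points of $X$ after twisting, and verify that the chosen torsor interacts correctly with both the linearization obstruction and the fixed-point structure of the $G$-action. I expect that the variety $X$ of Theorem~\ref{theorem:Kuznetsov_no} is constructed with precisely this subsequent twist in view: $X$ should carry an action of a torus whose character module has nontrivial $\Sha^1$ over $\mathbb{Q}$, and the twist by any nonzero class in this $\Sha^1$ directly yields $Y$. Once the correct $(X,G,P)$ is in hand, the proof becomes a formal application of the linearization theory developed in the paper combined with a Hasse-principle-failure computation for $G$.
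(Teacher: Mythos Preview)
Your proposal is essentially the paper's approach: $Y$ is indeed obtained by twisting the neutral toric variety $X$ (with acting torus $T=R^{(1)}_{\mathbb{Q}(\sqrt{5},\sqrt{29})/\mathbb{Q}}\mathbb{G}_m$) by a nontrivial torsor $U\in\ICP(\mathbb{Q},T)=\Sha^1(\mathbb{Q},T)$, the linearization machinery transports the \'etale-exceptional collection to $\twist{X}{U}$, and the absence of $\mathbb{Q}$-points follows because the open orbit becomes the nontrivial torsor $U$ (together with the toric fact that a rational point anywhere forces one on the open orbit). One clarification worth making: the role of Brauer-invisibility is not to ``descend the SOD'' (linearization up to sums already handles that for any torsor) but rather to guarantee that the twisted endomorphism algebras $\twist{\op{End}(E^{\oplus r})}{U}$ remain matrix algebras over \emph{fields} rather than over nontrivial division algebras---this is exactly what membership in $\ICP$ buys, since the assignment $U\mapsto[\op{End}_{\twist{X}{U}}(\Psi_U E^{\oplus r})]$ is a normalized Brauer-valued invariant.
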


The varieties $X$ and $Y$ in the above theorems above are \'etale twisted forms of one
another, i.e., they become isomorphic after extending scalars to a finite separable extension of $k$.
We construct $X$ as a neutral $T$-toric variety where $T$ is a
torus that is not retract rational.
Consequently, the invariant $\ICP(\mathbb{Q},T)$ introduced in \cite{BDLM}
is nontrivial.
The variety $Y$ is then constructed as the twist $\twist{X}{U}$
by a non-trivial torsor $U \in \ICP(\mathbb{Q},T)$.

En route to these theorems, we investigate the twisting of objects of derived categories under the presence of group actions in Section~\ref{section:linearizing_stuff}. These results provide the tools necessary to descend \'etale-exceptional collections to twisted forms using linearizations or equivariant structures. For a general object $E$ with connected automorphisms, we introduce a group $\widetilde{G}_E$ which controls whether $E$ can be linearized; see Definition~\ref{defn:Gtilde} and Lemma~\ref{lem:linearization=section}. We show that if $E$ is \'etale-exceptional and the group is connected, then there is always some $r \geq 1$ such that $E^{\oplus r}$ admits a linearization, extending a result of Polishchuk \cite[Lemma 2.2]{Polishchuk_unity}. Additional results on the representability of $\widetilde{G}_E$ are given in Appendix~\ref{appendix:representability}.

In Section~\ref{section:TCI}, we introduce a class of exceptional collections on toric varieties that are guaranteed to descend to \'etale-exceptional objects on any toric variety with a $k$-point. The proofs of the main results are then included in Section~\ref{section:mainproofs}.


\subsection*{Acknowledgments}

The authors would like to thank B.~Antieau and B.~Kunyavski for several helpful comments. We also are grateful for a conversation with A. Kuznsetov. We thank the anonymous referee for a careful reading and helpful comments.
Via the first author, this material is based upon work supported by the National
Science Foundation under Grant No.~NSF DMS-1501813.
Via the second author, this work was supported by a grant from the Simons Foundation
(638961, AD). The third author was partially supported by a USC SPARC grant. The fourth author was partially supported by an AMS-Simons travel grant and a Fordham Faculty Research Grant.


\section{Background} 

\subsection{Derived categories, semi-orthogonal decompositions, and exceptional collections}

We give some conventions for semiorthogonal decompositions and exceptional collections. 
For a triangulated category $\mathsf{T}$, we use the standard notation $\text{Ext}^n_{\mathsf{T}}(A, B) = \hom _{\mathsf{T}} (A, B[n])$.  For objects $A, B $ of $\op{D^b}(X)$, we use  $\text{End}_X(A)$ and $\text{Ext}_X^n(A, B)$ to denote $\text{End}_{\op{D^b}(X)}(A)$ and $\text{Ext}^n_{\op{D^b}(X)}(A, B)$, respectively.

\begin{definition}[see \cite{BK}]
Let $\mathsf{T}$ be a triangulated category.  A full triangulated subcategory of $\mathsf{T}$ is \emph{admissible} if its inclusion functor admits left and right adjoints.  A \emph{semiorthogonal decomposition} of $\mathsf{T}$ is a sequence of admissible subcategories $\mathsf{C}_1, ..., \mathsf{C}_s$ such that 
\begin{enumerate}
\item $\hom _{\mathsf{T}}(A_i, A_j) = 0$ for all $A_i \in \Ob (\mathsf{C}_i)$, $A_j \in \Ob (\mathsf{C}_j)$ whenever $i > j$.
\item For each object $T$ of $\mathsf{T},$ there is a sequence of
morphisms $0 = T_s \to \cdots \to T_0 = T$ such that the cone of $T_i \to T_{i-1}$ is an object of $\mathsf{C}_i$ for all $i = 1,..., s$.
\end{enumerate}
We use  $\mathsf{T} = \langle \mathsf{C}_1,..., \mathsf{C}_s\rangle$ to denote such a decomposition.
\end{definition}

Particularly nice examples of semiorthogonal decompositions are given by exceptional collections, the study of which goes back to Beilinson \cite{Beilinson}.

\begin{definition}\label{def:exceptional}
Let $\mathsf{T}$ be a $k$-linear triangulated category and let $A$ be a finite-dimensional $k$-algebra of finite homological dimension. An object $E$ in $\mathsf{T}$ is $A$-\emph{exceptional} if the following conditions hold:
\begin{enumerate}
\item $\text{End}_{\mathsf{T}}(E) \cong A$.
\item $\text{Ext}^n_{\mathsf{T}}(E, E) = 0$ for $n \neq 0$.
\end{enumerate}
We say $E$ is \emph{exceptional} if it is $A$-exceptional for a division
algebra $A$. We say $E$ is \emph{\'etale-exceptional} if $A$ is a finite separable field extension of $k$.

A totally ordered set $\mathsf{E} = \{E_1, ..., E_s\}$ of exceptional objects is an \emph{exceptional collection} if $\text{Ext}^n_{\mathsf{T}}(E_i, E_j) = 0$ for all integers $n$ whenever $i >j$.  An exceptional collection is $\emph{full}$ if it generates $\mathsf{T}$, i.e., the smallest thick subcategory of $\mathsf{T}$ containing $\mathsf{E}$ is all of $\mathsf{T}$.  An exceptional collection is \emph{strong} if $\text{Ext}^n_{\mathsf{T}}(E_i, E_j) = 0$ whenever $n \neq 0$.  An \emph{exceptional block} is an exceptional collection $\mathsf{E} = \{ E_1, ..., E_s\}$ such that $\text{Ext}^n_{\mathsf{T}}(E_i, E_j) = 0$ for every $n$ whenever $i \neq j$. An exceptional collection is \emph{\'etale-exceptional} if each of its objects is \'etale-exceptional. A collection is $k$-\emph{exceptional} if each object is $k$-exceptional.
\end{definition}

\subsection{Castravet and Tevelev's Exceptional Collection}
\label{subsection:a3}

In \cite{CT}, Castravet and Tevelev constructed a highly symmetric 
exceptional collection for the toric variety associated
to the fan of Weyl chambers of the root system $A_3$. Forms of this toric variety will be used to establish Theorem~\ref{theorem:BB_no}. 

The most symmetric way to represent the fan of $X(A_3)$ is to write it 
in the the quotient space $\mathbb{Z}^4/\mathbb{Z}(e_0+e_1+e_2+e_3)$. 
This carries a natural action by $S_4$ given by permutation of the 
indices. The maximal cones are the $S_4$-orbit of the cone spanned by 
the rays
\begin{displaymath}
  e_0, e_0+e_1, e_0+e_1+e_2.
\end{displaymath}
The automorphism group of the fan, which we denote $\Sigma_4$, is 
isomorphic to $S_4 \times C_2$ where the additional $C_2$ acts by 
complement on the set of indices in a sum: 
\begin{displaymath}
  \sum_{I \subsetneq \{0,1,2,3\}} e_i \mapsto \sum_{I^c} e_i. 
\end{displaymath}

In \cite{CT}, Castravet and Tevelev construct full $\op{Aut}(A_n)$-stable exceptional collections of sheaves for each of the split toric varieties corresponding to the root systems of type $A$, denoted $X(A_n)$. We recall those now.

An important idea in the construction are the cuspidal pieces of the derived categories of the varieties $X(A_n)$. An object $F$ of $\op{D^b}(X(A_n))$ is called \emph{cuspidal} if for all sub-root systems $A_{\ell} \leq A_n$ of type $A$, we have 
\begin{displaymath}
  \mathbf{R} \pi_\ast F = 0
\end{displaymath}
where $\pi : X(A_n) \to X(A_{\ell})$ is the corresponding map of toric varieties. 

The collections constructed in \cite{CT} are built inductively by pulling back the cuspidal pieces from subsystems of type $A_{\ell}$ for $\ell < n$ and then adding in the cuspidal part for $n$. We recall these collections in low dimensions.

\begin{itemize}
 \item $X(A_0) = \op{Spec} k$. The collection and whole cuspidal piece is $\mathcal O$. 

 \item $X(A_1) = \mathbb{P}^1$. The collection in \cite{CT} is $\{\mathcal O(-1), \mathcal O\}$ and the cuspidal piece is $\mathcal O(-1)$. 

\item $X(A_2)$ is del Pezzo surface of degree $6$. Viewing $X(A_2)$ as the blowup of $\bbP ^2$ at 3 non-colinear points, let $H$ be the pullback of the
hyperplane divisor on $\bbP^2$ and $E_i$ the exceptional divisors, $i =
1, 2, 3$. Then the collection is given by
\begin{displaymath}
  \{ \O(-H), \O(-2H + E_1 + E_2 + E_3), \O(-H + E_1),
  \end{displaymath}
  \begin{displaymath}
   \O(-H + E_2), \O(-H + E_3), \mathcal O\}.
\end{displaymath}
The cuspidal part is $\O(-H), \O(-2H + E_1 + E_2 + E_3)$. The line bundles $\O(-H + E_1), \O(-H + E_2), \O(-H + E_3)$ are $\O(-1)$ pulled back from the three copies of $A_1$ in $A_2$, and of course $\mathcal O$ is pulled back from $A_0$. 

 \item For $X(A_3)$, the collection consists of $1$ line bundle pulled back from $X(A_0)$, $6$ lines bundles coming from pulling back $\mathcal O(-1)$ from the six copies of $A_1$ in $A_3$, and $4 \times 2 = 8$ line bundles coming from pulling back $\O(-H), \O(-2H + E_1 + E_2 + E_3)$ from the four copies of $A_2$ in $A_3$, together with the cuspidal part. 

The cuspidal part breaks up into two blocks: one consisting of $3$ line bundles and the other of $6$ torsion sheaves. The line bundles are pulled back from the embedding of $X(A_3)$ into the wonderful compactification of the adjoint form of $A_3$ as the closure of the maximal torus. 

The torsion pieces can be described as follows. The divisors of $X(A_3)$ are the weights of $A_3$. The orbits are in bijection with nodes in the Dynkin diagram. There are six divisors corresponding to the middle node. Each such divisor, as a toric variety, is isomorphic to $X(A_1 \times A_1) \cong X(A_1) \times X(A_1)$. The torsion block consists of the $i_\ast \mathcal O(-1,-1)$ for each middle weight. 
\end{itemize}

\subsection{Arithmetic toric varieties} 
\label{subsection:arithmetic_toric_varieties}

We recall the theory of toric varieties defined over arbitrary fields.
These varieties have been treated in
\cite{Duncan, ELFST, MerkPan, Vos82Projective, VosKly},
and are sometimes called \emph{arithmetic toric varieties}.

Let $k$ be a field, $\bar{k}$ its separable closure, and $\Gamma = \op{Gal}(\bar{k}/k)$ its absolute Galois group.
An \emph{\'etale algebra} over $k$ is a direct sum
$E = F_1 \oplus \cdots \oplus F_r$
where $F_1, \ldots, F_r$ are separable field extensions of $k$.
There is an antiequivalence between finite continuous $\Gamma$-sets $\Omega$ and
\'etale algebras $E$ via
\[
\Omega = \Hom_{k-\mathsf{Alg}}(E,\overline{k})
\textrm{ and } E = \Hom_{\Gamma-\mathsf{Set}}(\Omega,\overline{k})
\]
with the natural $\Gamma$-action and $k$-algebra structure
on $\overline{k}$ (see, e.g., \cite[\S{18}]{BOI}).

\begin{definition}
A $k$-\emph{torus} is an algebraic group $T$ over $k$ such that $T _{\overline{k}} \cong \gm ^n$. A torus is \emph{split} if $T \cong \gm ^n$.
A $\Gamma$-lattice is a free finitely generated abelian group with
a continuous action of $\Gamma$.
A $\Gamma$-lattice is \emph{permutation} if it has a basis permuted by
$\Gamma$.
\end{definition}

Recall that there is an anti-equivalence of categories between
$\Gamma$-lattices and $k$-tori, which we call \emph{Cartier duality}.
Given a torus $T$, the Cartier dual (or \emph{character lattice}) $\widehat{T}$ is the
$\Gamma$-lattice $\op{Hom}(\overline{T}, \mathbb{G}_{m,\overline{k}})$.
Given a $\Gamma$-lattice $M$, we use the notation $\cd M$ for the
Cartier dual torus.

Given an \'etale algebra $E$, the Weil restriction $R_{E/k} \gm$ is
isomorphic to the torus $T$ whose character lattice $\widehat{T}$
is a permutation module with a basis given by the $\Gamma$-set
$\Omega$ corresponding to $E$.

An important class of tori arise via the kernel of the 
norm map. Recall that any \'etale algebra $E$ over $k$ has a \emph{norm map}
$N : E^\times \to k^\times$.  We obtain an exact sequence
\[
1 \to R_{E/k}^{(1)} \gm \to R_{E/k} \gm \to \gm \to 1
\]
of tori over $k$, where the torus $R_{E/k}^{(1)} \gm$ is called
the \emph{norm-one torus} of the extension $E/k$. 
Of particular importance to this article is the norm-one torus for 
an extension $K/k$ with 
\begin{displaymath}
  \op{Gal}(K/k) = C_2 \times C_2. 
\end{displaymath}
We can describe the Cartier dual of $R^{(1)}_{K/k} \mathbb{G}_m$ 
explicitly. We have the short exact sequence 
\begin{displaymath}
  0 \to \mathbb{Z} \to \mathbb{Z}[C_2 \times C_2] \to J \to 0
\end{displaymath}
where the first map is determined by 
\begin{displaymath}
  1 \mapsto 1 + \sigma + \tau + \sigma\tau 
\end{displaymath}
if we write $C_2 \times C_2 = \langle \sigma \rangle \times 
\langle \tau \rangle$. Note that $J$ is dual to the kernel $I$ of the augmentation of $\mathbb{Z}[C_2 \times C_2]$. 

\begin{lemma} \label{lemma:aug_ideal_presentation}
  There is a basis for $I$ where $\sigma$ and $\tau$ act via the matrices 
  \begin{displaymath}
    \begin{pmatrix}
      -1 & 0 & 0 \\
      0 & 0 & -1 \\
      0 & -1 & 0 \\
    \end{pmatrix}
    \text { and }
    \begin{pmatrix}
      1 & 0 & 0 \\
      1 & -1 & 0 \\
      1 & 0 & -1 \\
    \end{pmatrix}
  \end{displaymath}
  respectively.
\end{lemma}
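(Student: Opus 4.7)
The plan is to exhibit an explicit $\Z$-basis of $I$ realizing the claimed matrices and then verify the identities by direct substitution. Since $I$ is the kernel of the augmentation map $\Z[C_2 \times C_2] \to \Z$, a natural starting basis is $e_1 = \sigma-1$, $e_2 = \tau-1$, $e_3 = \sigma\tau-1$. Using $\sigma(g-1) = (\sigma g - 1) - (\sigma - 1)$ (and its analogue for $\tau$), I can write down $\sigma$ and $\tau$ as explicit integer matrices in the $e$-basis. Each has characteristic polynomial $(t+1)^2(t-1)$, in agreement with the spectrum of the target matrices, so a basis realizing the claimed form exists at least over $\Q$; the task is to produce one over $\Z$.

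To find a candidate basis, I read off the eigenspace structure encoded in the target matrices: the first basis vector should span a $-1$-eigenline of $\sigma$ on which $\tau$ acts by $f_1 \mapsto f_1 + f_2 + f_3$, while $f_2, f_3$ should span a rank-$2$ sublattice on which $\tau$ acts by $-1$ and $\sigma$ swaps them with a sign. Matching these constraints against the explicit $e$-basis description of $\sigma$ and $\tau$, with the additional requirement that the transition matrix lie in $\GL_3(\Z)$, leads to the basis
\[
f_1 = \sigma\tau - \tau, \qquad f_2 = \sigma - \sigma\tau, \qquad f_3 = \tau - 1,
\]
whose transition matrix to $\{e_1,e_2,e_3\}$ is visibly unimodular.

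All that remains is a routine verification: one computes, for instance, $\sigma f_1 = \tau - \sigma\tau = -f_1$, $\sigma f_2 = 1 - \tau = -f_3$, $\sigma f_3 = \sigma\tau - \sigma = -f_2$, and $\tau f_1 = \sigma - 1 = f_1 + f_2 + f_3$, $\tau f_2 = \sigma\tau - \sigma = -f_2$, $\tau f_3 = 1 - \tau = -f_3$. Assembled column-by-column, these nine identities recover exactly the two matrices in the statement.

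The only real obstacle is guessing the basis; once a candidate is in hand, the proof reduces to bookkeeping. An alternative, slightly more conceptual route would be to decompose $I$ as a $\langle \sigma \rangle$-module into $\Z \oplus \Z[\langle\sigma\rangle]$ (using that $f_1$ generates a $-1$-eigenline split by $\{f_2,f_3\}$ on which $\sigma$ acts via the regular representation twisted by $-1$), and then constrain the $\tau$-action to be compatible with the relation $\sigma\tau = \tau\sigma$; this yields the same basis without a parameter search, but in either case the final verification is the same direct computation.
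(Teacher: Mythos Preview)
Your proof is correct and follows essentially the same approach as the paper: exhibit an explicit $\Z$-basis of $I$ and verify the matrices by direct computation. The paper uses the basis $1-\sigma,\ \tau-1,\ \sigma-\sigma\tau$ (and leaves the verification to the reader), whereas you use $\sigma\tau-\tau,\ \sigma-\sigma\tau,\ \tau-1$; both choices are unimodular over the standard augmentation-ideal basis and yield the stated matrices, so the difference is cosmetic.
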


\begin{proof}
  We take the basis $1 - \sigma$, $\tau -1$, and $\sigma - \sigma \tau$. We leave it to the reader to check that multiplication by $\sigma$ induces the first matrix while multiplication by $\tau$ gives the second. 
\end{proof}

\begin{definition}\label{defn:ATV}
Given a torus $T$, a \emph{toric} $T$-\emph{variety} is a normal variety with a faithful $T$-action and a dense open $T$-orbit.  A toric $T$-variety is \emph{split} if $T$ is a split torus.
A toric $T$-variety whose dense open $T$-orbit contains a $k$-rational
point is called \emph{neutral} \cite{Duncan} (or a \emph{toric}
$T$-\emph{model} \cite{MerkPan}).
An orbit of a split torus always has a $k$-point, so a split toric
variety is neutral; but the converse is not true in general.
\end{definition}

To check if a toric variety has a rational point, it suffices to check on the open dense orbit. 

\begin{proposition} \label{proposition:rational_points}
  Let $X$ be a smooth projective toric $T$-variety over a field $k$ 
  with dense open $T$-orbit $U$. Then $X(k) \not = \emptyset$ if and 
  only if $U(k) \not = \emptyset$.
\end{proposition}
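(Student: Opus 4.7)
The forward direction is trivial from $U \subseteq X$. For the converse, given $x \in X(k)$, I would first locate the geometric $T$-orbit containing $x$. Since $x$ is $\Gamma$-fixed, the corresponding cone $\sigma$ of the fan of $X$ is $\Gamma$-stable, so the orbit $O_\sigma$ is defined over $k$, and $x$ trivializes the structure of $O_\sigma$ as a torsor under $T/T_\sigma$, where $T_\sigma \subseteq T$ is the subtorus whose cocharacter lattice $N_\sigma$ is generated by the primitive rays $\rho_1, \ldots, \rho_d$ of $\sigma$. In particular, the class $[O_\sigma]$ vanishes in $H^1(k, T/T_\sigma)$.

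The smoothness of $X$ implies that $\rho_1, \ldots, \rho_d$ form a $\mathbb{Z}$-basis of $N_\sigma$, and $\Gamma$-stability of $\sigma$ forces $\Gamma$ to permute them. Thus $N_\sigma$ is a permutation $\Gamma$-lattice, making $T_\sigma$ isomorphic to $R_{E/k}\mathbb{G}_m$ for the \'etale $k$-algebra $E$ dual to the $\Gamma$-set $\{\rho_1, \ldots, \rho_d\}$. By Shapiro's lemma and Hilbert's Theorem 90, this gives $H^1(k, T_\sigma) = 0$.

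To relate $[U]$ to $[O_\sigma]$, I would construct a canonical $T$-equivariant retraction $r \colon U_\sigma \to O_\sigma$ on the affine chart containing $O_\sigma$. Over $\overline{k}$, this map is induced by the inclusion of semigroups $M \cap \sigma^\perp \hookrightarrow M \cap \sigma^\vee$; the uniqueness of a $T$-equivariant extension of the identity on $O_\sigma$ then ensures that $r$ descends to a $k$-morphism. Restricting $r$ to the open orbit produces a $T_\sigma$-torsor $U \to O_\sigma$, so the natural map $H^1(k, T) \to H^1(k, T/T_\sigma)$ sends $[U]$ to $[O_\sigma]$. Combined with the exact sequence associated to $1 \to T_\sigma \to T \to T/T_\sigma \to 1$ and the vanishing $H^1(k, T_\sigma) = 0$, this forces $[U] = 0$ and hence $U(k) \neq \emptyset$.

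The main technical point is the descent of the retraction $r$ to $k$; once that is in hand, the cohomological computation is immediate. A reader preferring an entirely geometric argument could instead use the explicit local structure over $\overline{k}$: the $k$-point $x$ corresponds to a point with vanishing $\sigma^\vee \setminus \sigma^\perp$ coordinates, and setting those coordinates to the unique $\Gamma$-invariant lift of $1 \in R_{E/k}\mathbb{G}_m(k)$ (using the permutation structure of the rays) produces an explicit $k$-point of $U$.
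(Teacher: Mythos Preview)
The paper does not prove this statement; it simply cites \cite[Proposition~4]{VosKly}. Your argument is a correct direct proof and is in fact the standard one: locate the $\Gamma$-stable cone $\sigma$ whose orbit contains the given $k$-point, use smoothness to see that $T_\sigma$ is quasi-trivial (so $H^1(k,T_\sigma)=0$), and then combine the $T$-equivariant retraction $U_\sigma\to O_\sigma$ with the long exact sequence for $1\to T_\sigma\to T\to T/T_\sigma\to 1$ to conclude $[U]=0$. One terminological slip: what you call the ``forward direction'' is the implication $U(k)\ne\emptyset\Rightarrow X(k)\ne\emptyset$; the substantive direction you then establish is the ``only if'' part. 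Your closing geometric variant is also fine---setting every ray-coordinate equal to $1$ is Galois-invariant precisely because the rays of the smooth $\Gamma$-stable cone $\sigma$ form a permuted basis.
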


\begin{proof}
See \cite[Proposition 4]{VosKly}.
\end{proof}

A \emph{toric variety} $X$ is simply a toric $T$-variety for some choice
of torus $T$.
Note that two toric varieties may be isomorphic as varieties,
but the isomorphism may not respect the torus actions.
Additionally, there may be multiple non-isomorphic tori
giving the same variety the structure of a toric variety.
However, any $k$-form of a toric variety is a toric variety
(albeit for a potentially different torus action).
Thus, understanding $k$-forms of toric varieties can be subtle.
We recall some tools for understanding forms of toric varieties;
a more comprehensive account can be found in \cite{Duncan}.

Let $\Sigma$ be a smooth projective fan in the space $N \otimes \real$
associated to the lattice $N$.
let $X(\Sigma)$ be the corresponding split toric variety.
Let $\Aut(\Sigma)$ be the group of isomorphisms of the fan $\Sigma$;
in other words,
$\Aut(\Sigma)$ is the subgroup of elements $g \in \operatorname{GL}(N)$
such that $g(\sigma) \in \Sigma$ for every cone $\sigma \in \Sigma$.
There is a natural inclusion of $\gm^n \rtimes \Aut(\Sigma)$
into $\Aut(X)$ as the subgroup leaving the open orbit $\gm^n$-invariant.

The natural map 
\[ H^1(k, T \rtimes \Aut(\Sigma)) \to H^1(k, \Aut(X)) \]
in Galois cohomology is surjective;
the failure of this map to be a bijection amounts to the fact that there
may be several non-isomorphic toric variety structures
on the same variety.

Suppose $\alpha$ is a cocycle representing a class in $H^1(k, \Aut(\Sigma))$.
In other words, $\alpha$ is a homomorphism
$\Gamma \to \GL(N) \cong \GL_n(\Z) \cong \Aut(\gm^n)$.
Thus, by twisting, we obtain a torus $T = \leftidx{^\alpha}{(\gm^n)}$ 
and a neutral toric $T$-variety $\leftidx{^\alpha}{(X(\Sigma))}$.
All neutral toric varieties can be obtained in this way.

More generally,
suppose $X=\leftidx{^\gamma}{(X(\Sigma))}$ is a twisted form of a split toric variety
for a cocycle $\gamma$ representing a class in
$H^1(k, \gm^n \rtimes \Aut(\Sigma))$.
There is a ``factorization''
$X=\leftidx{^\beta}{(}\leftidx{^\alpha}{(X(\Sigma))})$,
where $\alpha$ represents a class in $H^1(k, \Aut(\Sigma))$
and $\beta$ represents a class in $H^1(k, T)$
where $T := \leftidx{^\alpha}{(\gm^n)}$.
Informally, $\beta$ changes the torus that acts on $X$,
while $\alpha$ changes the torsor of the open orbit in $X$.

Let $X$ be a toric $T$-variety and let $M$ be the Cartier dual of $T$.
Let $\op{Div}_{\overline{T}}(\overline{X})$ be the group of
$\overline{T}$-invariant divisors on $\overline{X}$.
By functoriality, $\Gamma$ acts on $\op{Div}_{\overline{T}}(\overline{X})$
and $\op{Pic}(\overline{X})$.
We have an exact sequence
\begin{displaymath}
  0 \to M \to \op{Div}_{\overline{T}}(\overline{X}) \to \op{Pic}(\overline{X}) \to 0,
\end{displaymath}
just like in the split case.
The K-theory $K_0(\overline{X})$ also has an action of $\Gamma$ by
functoriality.
In fact, from \cite{MerkPan} we see that $K_0(X)$ is a $\Gamma$-lattice
and $K_0(X) \cong K_0(\overline{X})^\Gamma$.

\subsection{Torsors invisible to Brauer invariants}
\label{subsection:icp} 

We recall the set $\ICP(k,G)$ introduced in \cite{BDLM}. This collection serves as the source of torsors which produce arithmetically interesting forms via twisting. 

Let $G$ be a connected reductive algebraic group over $k$, e.g., a torus. The Galois cohomology pointed sets $H^i(k, G)$ are functorial in both $k$ and $G$. We view $H^i(-,G)$ as a functor from the category of field extensions of $k$ to the category of points sets (or groups if $i = 0$, or abelian groups if $G$ is commutative). Following \cite{Skip}, a \emph{normalized cohomological invariant} (of degree 2) is a natural transformation $$\alpha: H^1(-, G) \to H^2(-, S)$$ where $S$ is a commutative linear algebraic group and $\alpha$ takes the distinguished point to 0. We note that since $H^2(k, R_{E/k} \gm) = \op{Br}(E)$ for an \'etale algebra $E$, one recovers Brauer invariants by taking $S = R_{E/k} \gm$.

\begin{definition}
 Let
\begin{displaymath}
  \ICP(k,G) := \bigcap_E \bigcap_\alpha \op{ker} 
  \left( \alpha(k) \colon H^1(k,G) \to \Br(E) \right),
\end{displaymath}
where the intersections run over all \'etale algebras $E$
and all normalized cohomological invariants $\alpha$.
\end{definition}

The set $\ICP(k,G)$ measures torsors which cannot be detected by 
any Brauer invariant. In the case $k$ is a number field, we have the 
following characterization.

\begin{proposition} \label{proposition:icp_tori}
  Let $T$ be a torus over a number field $k$. Then there exists a 
  canonical isomorphism 
  \begin{displaymath}
    \ICP(k,T) \cong \Sha^1(k,T)
  \end{displaymath}
  where $\Sha^1$ is the Tate-Shafarevich group of $T$.
\end{proposition}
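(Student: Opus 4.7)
The plan is to show the equality $\ICP(k,T) = \Sha^1(k,T)$ of subgroups of $H^1(k,T)$, by establishing both inclusions. For the inclusion $\Sha^1(k,T) \subseteq \ICP(k,T)$, which is formal: given a normalized cohomological invariant $\alpha \colon H^1(-,T) \to \Br(- \otimes_k E)$ and any $\beta \in \Sha^1(k,T)$, naturality of $\alpha$ under the base change $k \hookrightarrow k_v$ gives $\alpha(\beta)|_{E \otimes_k k_v} = \alpha_{k_v}(\beta|_{k_v}) = 0$ at every place $v$ of $k$. Writing $E = F_1 \oplus \cdots \oplus F_r$ as a sum of number fields, the Albert-Brauer-Hasse-Noether theorem supplies the injection $\Br(E) \hookrightarrow \bigoplus_{w} \Br(E_w)$, so $\alpha(\beta) = 0$ and $\beta \in \ICP(k,T)$.

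The reverse inclusion $\ICP(k,T) \subseteq \Sha^1(k,T)$ is the substantive direction. My strategy is to exhibit, for each $\beta \in H^1(k,T) \setminus \Sha^1(k,T)$, an invariant not vanishing on $\beta$. A natural family of invariants arises from cup products: for each étale algebra $E/k$ and class $\chi \in H^1(E, \widehat{T}_E)$, the canonical pairing $\widehat{T} \otimes T \to \mathbb{G}_m$ produces a normalized invariant $\alpha_{E,\chi} \colon H^1(F, T) \to \Br(F \otimes_k E)$ sending $\beta'$ to $\beta'|_{F \otimes_k E} \cup \chi$.

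Given $\beta \notin \Sha^1(k,T)$, pick a place $v$ with $\beta_v \neq 0$. Local Tate-Nakayama duality provides a perfect pairing $H^1(k_v,T) \times H^1(k_v, \widehat{T}) \to \Br(k_v)$, furnishing $\chi_v \in H^1(k_v, \widehat{T})$ with $\beta_v \cup \chi_v \neq 0$. The key task is to globalize $\chi_v$: produce an étale algebra $E/k$ with a place $w \mid v$ of $E$ and a class $\chi \in H^1(E, \widehat{T}_E)$ whose restriction at $w$ recovers $\chi_v$. The class $\alpha_{E,\chi}(\beta) \in \Br(E)$ then localizes at $w$ to $\beta_v \cup \chi_v \neq 0$, so $\beta$ is detected by $\alpha_{E,\chi}$ and is not in $\ICP(k,T)$.

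The main obstacle is this globalization step. Over $k$ itself, the restriction $H^1(k, \widehat{T}) \to H^1(k_v, \widehat{T})$ is typically not surjective, with obstruction measured by $\Sha^2(k,\widehat{T})$, which by Cassels-Tate duality is dual to $\Sha^1(k,T)$ and generically nonzero. The flexibility in choosing $E$ afforded by the definition of $\ICP$ is precisely what removes this obstruction: passing to a sufficiently large étale algebra $E$ makes $H^1(E, \widehat{T}_E)$ large enough---via Shapiro's lemma and the Poitou-Tate sequence---to accommodate any prescribed local class, allowing $\chi_v$ to be realized. I expect the detailed verification of this last step to be the most technical part of the argument, and I would be unsurprised to see it reduced to a direct citation of the corresponding construction in \cite{BDLM}.
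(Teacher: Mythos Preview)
Your argument is sound, and the globalization step you flag as the crux can in fact be completed cleanly: let $L/k$ be a Galois splitting field for $T$ with group $G$, fix a place $w$ of $L$ over $v$ with decomposition group $G_w$, and take $E = L^{G_w}$. Then $E$ has a place $u$ over $v$ with $E_u = k_v$, and inflation--restriction (using $H^1(L,\widehat{T}_L)=0$) identifies $H^1(E,\widehat{T}_E) \cong H^1(G_w,\widehat{T}) \cong H^1(k_v,\widehat{T})$, with the localization map at $u$ being the identity. So $\chi_v$ globalizes to $\chi$ on $E$, and $\alpha_{E,\chi}(\beta)$ has nonzero $u$-component $\beta_v \cup \chi_v$.

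The paper, however, does not argue this way at all. Its proof is a two-line citation: the statement already appears in the introduction of \cite{BDLM}, and the paper deduces it from \cite[Theorem~2]{BDLM} together with the observation that $\ICP(k_v,T_v)=\ast$ for every real place $v$. The latter holds because every torus over $\mathbb{R}$ is rational and $\ICP$ is a birational invariant of tori \cite[Proposition~4.2]{BDLM}. So the paper's route is: invoke a local--global comparison for $\ICP$ proved in the companion paper, then kill the archimedean discrepancy via rationality of real tori. Your route unpacks the mechanism (cup-product invariants, Tate duality, an explicit \'etale algebra) that presumably underlies the cited theorem. Your approach is more self-contained and illuminates why the freedom to vary $E$ in the definition of $\ICP$ is exactly what is needed; the paper's approach is shorter but opaque without access to \cite{BDLM}.
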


\begin{proof}
  This follows from \cite[Theorem 2]{BDLM} since, $\Zhe(k_v, T_v) = \ast$ for any real place $v$. Indeed, any $k_v$-torus is rational and $\Zhe$ is a birational 
  invariant of tori \cite[Proposition 4.2]{BDLM}. 
\end{proof}


\section{Linearizing \'etale-exceptional objects}
\label{section:linearizing_stuff}

In this section we investigate the question of twisting objects and the effect on endomorphism algebras. This involves understanding linearization of objects, especially exceptional ones. These techniques are used in Section~\ref{section:TCI}  to descend exceptional collections on split forms to \'etale-exceptional collections on twisted forms (see Proposition~\ref{prop:cat0_no_pts} and Lemma~\ref{lemma:CT_standard}). 

We begin by recalling the notion of $G$-equivariant structures and establish a relation to linearizations (Prop.~\ref{proposition:linearization_equiv}). We discuss twisting of objects and endomorphism algebras by $G$-torsors (Prop.~\ref{prop:twist_object_twists_end}). For an object $E$, we introduce the sheaf of groups $\widetilde{G}_E$ using the Isom functor (Def.~\ref{defn:Gtilde}). It is then established that $E$ is linearizable if and only if the map $\widetilde{G}_E \to G$ admits a homomorphic section (Lemma~\ref{lem:linearization=section}). We show that for \'etale-exceptional objects $E$ on smooth projective schemes, $E$ is linearizable up to direct sums if $g^* E_{\bar k } \cong E_{\bar k }$ for all $g \in G(\overline k)$ (Prop.~\ref{prop:lin_up_to_sum}). This holds when $G$ is connected (Lemma~\ref{lem:G_conn_E_fixed}).

We utilize the language of fibered categories and stacks following \cite{vistoli_stack, stacks-project}. We let $\textbf{Sch}_k$ denote the category of $k$-schemes.
Suppose $G$ is an algebraic group over $k$, and let $X$ be a $G$-variety. Denote the projection by 
\begin{displaymath}
  \pi : G \times X \to X, 
\end{displaymath}
the action by 
\begin{displaymath}
  \sigma : G \times X \to X,
\end{displaymath}
and the other projection by 
\begin{displaymath}
  p : G \times X \to G. 
\end{displaymath}

We recall the definition of equivariant objects in a fibered category $\mathcal C$ (see, e.g., \cite[Section 3.8]{vistoli_stack}). 
It is phrased in terms of the three maps $
G \times G \times X \to X$ given by 
\begin{itemize}
 \item $\pi_3 \text{ (the third projection) } $ 
\item $\sigma \circ (m \times \op{id}_X) = \sigma \circ (\op{id}_G \times \sigma)$
\item  $\pi \circ (\op{id}_G \times \sigma) = \sigma \circ \pi_{23},$ 
\end{itemize}
where $\pi_{23}$ is the projection onto the second and third components. 

\begin{definition}
  Let $\mathcal C$ be a fibered category over $\textbf{Sch}_k$. An \emph{equivariant object} of $\mathcal C(X)$ is a pair $(E,\phi)$ with $E \in \mathcal C(X)$ and $\phi: \pi^\ast E \to \sigma^\ast E$ an isomorphism satisfying the cocycle condition, given by the commutativity of the diagram 

  \begin{center}
    \begin{equation}\label{diagram:cocycle}
    \begin{tikzpicture}
      \node (p) at (-2,0) {$\pi_3^\ast (E)$};
      \node (b) at (2,0) {$(\sigma \circ \pi_{23})^\ast (E)$};
      \node (a) at (0,-2) {$( \sigma \circ (\op{id}_G \times \sigma))^\ast (E)$};
      \draw[->] (p) -- node[above]{$(m\times \operatorname{id}_X)^\ast \phi$} (b);
      \draw[->] (p) -- node[below left]{$\pi_{23}^\ast \phi$} (a);
      \draw[->] (a) -- node[below right]{$(\operatorname{id}_G \times \sigma)^\ast \phi$} (b);
    \end{tikzpicture}
      \end{equation}
  \end{center}

We denote the fibered category of equivariant objects as $\mathcal{C}_G$.
\end{definition}

\begin{definition}
The \emph{equivariant bounded derived category} $\op{D}^{\op{b}}_G(X)$ is the derived category of the abelian category of equivariant chain complexes. 
\end{definition}

Let $U$ be a right $G$-torsor.
We define the \emph{twist of $X$ by $U$} as the quotient
$\twist{X}{U} := (U \times X)/G$.
We have a diagram
\[
X \xleftarrow{\pi} U \times X \xrightarrow{s} \twist X U,
\]
where $\pi$ is the projection and $s$ is the quotient by the diagonal
$G$-action.

\begin{proposition}
  Let $G$ be a linear algebraic group over a field $k$,
  $X$ a left $G$-variety, and 
  $U$ a right $G$-torsor. Let $\mathcal C$ be an fppf stack over $\mathbf{Sch}_k$. 
  Then the functor 
  \begin{displaymath}
    \Psi_U := (s^\ast)^{-1} \circ \pi^\ast : \mathcal C_G (X) \to \mathcal C \left(\twist X U\right)
  \end{displaymath}
  is well-defined. 
\end{proposition}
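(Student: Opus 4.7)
The plan is to invoke fppf descent along the quotient map $s \colon U \times X \to Y$. Since $U$ is a $G$-torsor and $s$ is the quotient by the diagonal action of $G$ on $U \times X$, the map $s$ is itself a $G$-torsor and therefore an fppf cover. Because $\mathcal{C}$ is an fppf stack, $s^\ast$ identifies $\mathcal{C}(Y)$ with the category of descent data along $s$, so to define $(s^\ast)^{-1}\pi^\ast E$ it suffices to equip $\pi^\ast E$ with a descent datum, which we manufacture from the equivariance isomorphism $\phi$.

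Using the torsor structure, there is a canonical shear isomorphism $(U \times X) \times_Y (U \times X) \cong G \times U \times X$ sending $(g,u,x)$ to $\bigl((u,x),\, (ug, g^{-1}x)\bigr)$. Under this identification the two projections to $U \times X$ become the first projection $p_1(g,u,x) = (u,x)$ and the diagonal action $p_2(g,u,x) = (ug, g^{-1}x)$, so that $p_1^\ast \pi^\ast E$ and $p_2^\ast \pi^\ast E$ are canonically identified with the pullbacks of $E$ along the two maps $G \times U \times X \to X$ given by $(g,u,x) \mapsto x$ and $(g,u,x) \mapsto g^{-1}x$. Both factor through the forgetful map $G \times U \times X \to G \times X$ (followed by inversion on $G$), and pulling $\phi$ back along this composite yields the desired isomorphism $\psi \colon p_1^\ast \pi^\ast E \isomto p_2^\ast \pi^\ast E$.

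The cocycle identity for $\psi$ on the triple fiber product $(U \times X) \times_Y (U \times X) \times_Y (U \times X)$, which via the analogous shearing is isomorphic to $G \times G \times U \times X$, translates term by term into the pullback of the equivariance cocycle triangle for $\phi$ along the forgetful map to $G \times G \times X$. Hence $\psi$ is a valid descent datum, and effective descent produces an object $F \in \mathcal{C}(Y)$, unique up to canonical isomorphism, with $s^\ast F \cong (\pi^\ast E, \psi)$; we set $\Psi_U(E,\phi) := F$.

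For morphisms, given $f \colon (E_1,\phi_1) \to (E_2,\phi_2)$ in $\mathcal{C}_G(X)$, the compatibility of $f$ with the equivariant structures forces $\pi^\ast f$ to intertwine $\psi_1$ and $\psi_2$, so descent supplies a unique morphism $\Psi_U(f) \colon F_1 \to F_2$, and functoriality of $\Psi_U$ follows from the uniqueness clause of descent. The only real obstacle is the bookkeeping needed to convert the shear identifications and the $g \leftrightarrow g^{-1}$ swap into a clean match between the equivariance cocycle on $G \times G \times X$ and the descent cocycle on the triple fiber product; this is formal once the diagrams are laid out.
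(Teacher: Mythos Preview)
Your proof is correct and follows essentially the same strategy as the paper: both arguments hinge on the fact that $s \colon U \times X \to Y$ is a $G$-torsor, so that objects on $Y$ are the same as $G$-equivariant objects (equivalently, descent data) on $U \times X$. The only difference is packaging: the paper first observes that $\pi^\ast$ lands in $\mathcal{C}_G(U \times X)$ and then cites \cite[Theorem 4.46]{vistoli_stack} for the equivalence $s^\ast \colon \mathcal{C}(Y) \simeq \mathcal{C}_G(U \times X)$, whereas you unpack that equivalence by hand via the shear isomorphism and verify the descent cocycle directly.
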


\begin{proof}
  Note that we have a commutative diagram 
  \begin{center}
    \begin{tikzpicture}
      \node (a) at (-1.5,1) {$G \times U \times X$};
      \node (b) at (1.5,1) {$U \times X$};
      \node (c) at (-1.5,-1) {$G \times X$};
      \node (d) at (1.5,-1) {$X$};
      \draw[->] (a) -- (b);
      \draw[->] (a) -- node[left]{$1 \times \pi$} (c);
      \draw[->] (c) -- (d);
      \draw[->] (b) -- node[right]{$\pi$} (d);
    \end{tikzpicture}
  \end{center}
  where the horizontal arrows are the actions of $G$. 
  Thus, we have a functor
  \begin{displaymath}
    \pi^\ast : \mathcal C_G(X) \to \mathcal C_G (U \times X).
  \end{displaymath}
  Since $U \times X \to \twist X U$ is a $G$-torsor, we apply 
  \cite[Theorem 4.46]{vistoli_stack} to conclude that 
  pullback by the projection  
$s^\ast : \mathcal C \left(\twist X U \right) \to \mathcal C_G( U \times X)$
  is an equivalence, yielding a well-defined functor $(s^\ast)^{-1} \circ \pi^\ast.$
\end{proof}

The main case of interest is where $\mathcal C$ is the stack 
$\mathcal D^b_{\operatorname{pug}}(X)$ of \cite{Lieblich} when $X$ is assumed to be smooth and projective. 

\begin{definition}
  Let $\mathcal D^b_{\op{pug}}(X)$ denote the stack of \emph{perfect universally gluable objects}, i.e., those perfect objects $E$ of $\rm{D}^{\rm{b}}(X)$ satisfying 
  \begin{displaymath}
    \op{Ext}^{-i}_X(E,E) = 0 
  \end{displaymath}
  for all $i > 0$. This is an Artin stack over $k$ \cite[Main Theorem]{Lieblich}. 
\end{definition}

Recall that for a finite-dimensional $k$-algebra $A$ with a 
$G$-action, the \emph{twist of $A$ by $U$} is the invariant 
algebra 
\begin{displaymath}
  \twist{A}{U} := \left( A \otimes_k k[U] \right)^G.
\end{displaymath}

\begin{proposition} \label{prop:twist_object_twists_end}
  Assume that $X$ is smooth and projective. Then for any 
  $G$-equivariant object $E$ of $\mathcal D^b_{\operatorname{pug}}(X)$ 
  there is a natural isomorphism 
  \begin{displaymath}
    \operatorname{End}_{\twist X U} (\Psi_U E) \cong \twist{\operatorname{End}_X(E)}{U}.
  \end{displaymath}
\end{proposition}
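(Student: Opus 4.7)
The strategy is to reduce the endomorphism algebra on $Y$ to a $G$-invariant computation on the torsor $U \times X$ and then to evaluate that via affine base change. By construction $\Psi_U E = (s^\ast)^{-1}\pi^\ast E$, and $s^\ast$ is an equivalence by \cite[Theorem 4.46]{vistoli_stack}, so there is a natural isomorphism
\[
\operatorname{End}_Y(\Psi_U E) \;\cong\; \operatorname{End}_{\mathcal{C}_G(U \times X)}(\pi^\ast E),
\]
where $\pi^\ast E$ carries the pullback equivariant structure arising from the commutative square of actions used to define $\Psi_U$. Equivariant endomorphisms in an fppf stack are exactly the (rational) $G$-invariants of the underlying endomorphisms, so the right-hand side identifies with $\operatorname{End}_{U \times X}(\pi^\ast E)^G$.

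To compute this underlying endomorphism algebra, I use that any $G$-torsor over $\operatorname{Spec} k$ for a linear algebraic group $G$ is affine, so $U = \operatorname{Spec} k[U]$ and the projection $\pi: U \times X \to X$ is affine and faithfully flat. Since $X$ is smooth projective and $E$ is perfect, the projection formula together with flat base change yield
\[
\operatorname{End}_{U \times X}(\pi^\ast E) \;\cong\; \operatorname{End}_X(E) \otimes_k k[U].
\]

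The final step, and the genuine thing to verify, is that under the above chain of identifications the induced $G$-action on $\operatorname{End}_X(E) \otimes_k k[U]$ coincides with the diagonal of the right-translation action on $k[U]$ and the conjugation action on $\operatorname{End}_X(E)$ coming from the linearization $\phi$. Once this is checked, taking $G$-invariants gives $\twist{\operatorname{End}_X(E)}{U}$ by definition. I expect this matching to be essentially formal: the base-change isomorphism is induced by the adjunction $(\pi^\ast, \pi_\ast)$, and the equivariant structure on $\pi^\ast E$ is the pullback of $\phi$ along the square of actions, so both the $k[U]$-factor and the $\operatorname{End}_X(E)$-factor inherit their actions from the same cocycle. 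The main obstacle is purely diagrammatic bookkeeping --- chasing $\phi$ and the cocycle condition through the commutative squares relating $\sigma$, $\pi$, and $s$ to confirm that no spurious twist is introduced. After that, naturality in $E$ follows from the naturality of each intermediate isomorphism.
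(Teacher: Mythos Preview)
Your proposal is correct and follows essentially the same route as the paper: reduce $\operatorname{End}_Y(\Psi_U E)$ to $\operatorname{End}_{U\times X}(\pi^\ast E)^G$ via the equivalence $s^\ast$, then use adjunction and the projection formula for the affine flat map $\pi$ together with perfection of $E$ to identify $\operatorname{End}_{U\times X}(\pi^\ast E)$ with $\operatorname{End}_X(E)\otimes_k k[U]$. If anything, you are more scrupulous than the paper, which simply asserts the final isomorphism without explicitly checking that the induced $G$-action matches the diagonal one defining $\twist{\operatorname{End}_X(E)}{U}$; your flagging of that verification is appropriate but, as you suspect, it is formal bookkeeping.
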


\begin{proof}
  We have $\operatorname{End}_{\twist X U } ( \Psi_U E ) = 
    \left( \operatorname{End}_{U \times X}(p^\ast E) \right)^G.$
  From adjunction, with $p$ flat and affine, we have an isomorphism 
   $\operatorname{Hom}_{U \times X}( p^\ast E, p^\ast E ) \cong 
    \operatorname{Hom}_{X}(E, p_\ast p^\ast E).$
  Viewing $k[U]$ as being pulled back from $\operatorname{Spec}(k)$, the projection formula yields $p_\ast p^\ast E \cong E \otimes_k k[U].$
  Since $E$ is perfect, the natural map 
  \begin{displaymath}
    \op{Hom}_X (E,E) \otimes_k k[U] \to \op{Hom}_X (E, E \otimes_k k[U])
  \end{displaymath}
  is an isomorphism. Thus, 
$\operatorname{End}_{\twist X U } ( \Psi_U E ) \cong  
    \twist{\operatorname{End}_X(E)}{U}. $
  
\end{proof}

\begin{definition}
  Let $X$ be a scheme with an action of algebraic group $G$. We say an object $E$ of $\op{D^b}(X)$ 
  is \emph{linearizable up to sums} if there is some $r \geq 1$ such that
  $E^{\oplus r}$ lies in the essential image of the forgetful functor
  \begin{displaymath}
    \op{D^b_{\emph{G}}}(X) \to \op{D^b}(X).
  \end{displaymath}
  If $r=1$, we say that $E$ is \emph{linearizable}. A choice of lift 
  of $E$ to $\op{D^b_{\emph{G}}}(X)$ shall be called a \emph{linearization}
  of $E$. 
\end{definition}

\begin{proposition} \label{proposition:linearization_equiv}
  Assume $X$ is smooth and projective. For an object $E$ of 
  $\mathcal D^b_{\op{pug}}(X)$, a linearization of $E$ is equivalent 
  to a $G$-equivariant structure for $E$, i.e., an isomorphism 
  satisfying the cocycle condition \ref{diagram:cocycle}.
\end{proposition}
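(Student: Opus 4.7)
The plan is to verify the equivalence in both directions, with the reverse direction requiring the full strength of the $\op{pug}$ hypothesis.

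For the forward direction, suppose $E$ admits a linearization $\tilde{E} \in \op{D^b_G}(X)$, represented by a bounded complex of $G$-equivariant quasi-coherent sheaves. For any morphism $f \colon T \to X$ of $G$-schemes, the pullback $f^*\tilde{E}$ inherits a canonical equivariant structure. Applied to the two natural maps $\pi, \sigma \colon G \times X \to X$, this produces a canonical isomorphism $\phi \colon \pi^*E \xrightarrow{\sim} \sigma^*E$ in $\mathcal{D}^b_{\op{pug}}(G \times X)$. The cocycle condition over $G \times G \times X$ then follows from the associativity of the $G$-action together with naturality of pullback.

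For the reverse direction, I would interpret both sides moduli-theoretically. By Lieblich's theorem, $\mathcal{D}^b_{\op{pug}}$ is an Artin stack, so an object $E \in \mathcal{D}^b_{\op{pug}}(X)$ corresponds to a morphism $X \to \mathcal{D}^b_{\op{pug}}$. A cocycle $\phi$ satisfying the associativity diagram of the definition is precisely a descent datum for this morphism along the atlas $X \to [X/G]$, which is an fppf cover because $G$ is flat over $k$. Effectivity of fppf descent for Artin stacks then produces a morphism $[X/G] \to \mathcal{D}^b_{\op{pug}}$, i.e., an object $\mathcal{E} \in \mathcal{D}^b_{\op{pug}}([X/G])$. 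The standard dictionary between quasi-coherent sheaves on $[X/G]$ and $G$-equivariant quasi-coherent sheaves on $X$, applied to a resolution of $\mathcal{E}$, then yields a lift of $E$ to $\op{D^b_G}(X)$.

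The main obstacle is that a priori a cocycle in a triangulated derived category need not assemble into an honest complex of equivariant sheaves, since strict associativity of pullbacks is only available after choosing resolutions. The $\op{pug}$ condition is what rescues this: because $\op{Ext}^{<0}(E,E) = 0$, the automorphism functor of $E$ is representable by a smooth affine group scheme and the moduli problem is classified by an Artin (rather than higher) stack. Consequently, there is no higher homotopical data to coherently specify, and the stacky cocycle strictifies to the desired linearization.
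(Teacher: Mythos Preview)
Your forward direction is fine and matches the paper. The reverse direction, however, has a gap at the crucial step. The morphism $[X/G] \to \mathcal{D}^b_{\op{pug}}$ you obtain by fppf descent is, by the very way one evaluates a stack on another stack (via $2$-Yoneda), nothing more than the cocycle datum $(E,\phi)$ you started with. When you then write ``applied to a resolution of $\mathcal{E}$'' there is nothing to resolve: at this stage $\mathcal{E}$ is only a $1$-morphism of stacks, not a complex of quasi-coherent sheaves on $[X/G]$. Identifying $\op{Hom}\big([X/G],\mathcal{D}^b_{\op{pug}}\big)$ with the pug objects of $\op{D^b}([X/G]) \simeq \op{D^b_G}(X)$ is exactly the statement to be proved, so the argument is circular. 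Your closing paragraph correctly diagnoses the obstacle but then asserts that ``the stacky cocycle strictifies,'' which is the desired conclusion rather than an argument for it.

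The paper handles the reverse direction by direct citation: it is a consequence of Th\'eor\`eme~3.2.4 of Beilinson--Bernstein--Deligne, as invoked in the proof of Lemma~2.2 of Polishchuk. That result is precisely the strictification theorem asserting that a bounded complex with vanishing negative self-Exts, equipped with a derived-category-level descent datum, can be realized by an honest equivariant complex. This is in fact the same mechanism underlying Lieblich's proof that $\mathcal{D}^b_{\op{pug}}$ is a stack in the first place, so even a repaired version of your route would ultimately rest on the same input; the paper simply cites it at the point where it is actually needed.
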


\begin{proof}
  The category $\op{D^b_{\emph{G}}}(X)$ has as its objects complexes 
  with choices of equivariant structure on the complex. This provides the 
  equivariant structure for $E$ in $\mathcal D^b_{\op{pug}}(X)$. 

  The other direction is a consequence of \cite[Th\'eor\`eme 3.2.4]
  {Perverse_Sheaves}, see \cite[proof of Lemma 2.2]{Polishchuk_unity}. 
\end{proof}

Our first goal is to construct a canonical group sheaf which controls the existence of linearizations for any object of a stack. This is accomplished using the Isom-functor. 

\begin{definition}\label{defn:Gtilde}
  Let $\mathcal C$ be a fibered category over $\mathbf{Sch}_k$, 
  $f : X \to Y$ a morphism of $k$-schemes, and $E,F$ objects 
  of $\mathcal C(X)$. The \emph{Hom-functor} associated to this data 
  has 
  \begin{displaymath}
    \operatorname{Hom}_f(E,F)(T) := 
    \lbrace g : T \to Y \ , \alpha: E_T\to F_T \rbrace 
  \end{displaymath}
  for a test $k$-scheme $T$. Here $E_T,F_T$ are the pullbacks to $X_T$
  via the map coming from the Cartesian diagram
  \begin{center}
    \begin{tikzpicture}
      \node (a) at (-1,1) {$X_T$};
      \node (b) at (1,1) {$X$};
      \node (c) at (-1,-1) {$T$};
      \node (d) at (1,-1) {$Y$};
      \draw[->] (a) -- (b);
      \draw[->] (a) -- (c);
      \draw[->] (b) -- node[right] {$f$} (d);
      \draw[->] (c) -- node[below] {$g$} (d);
    \end{tikzpicture}
  \end{center}
  The \emph{Isom-functor} $\operatorname{Isom}_f(E,F)$ is the 
  subfunctor of $\operatorname{Hom}_f(E,F)$ where $\alpha$ is 
  required to be an isomorphism.
  In the special case where $f = p : G \times X \to G$ with $G$ acting on $X$, we use the notation 
  \begin{displaymath}
    \widetilde{G}_E := \operatorname{Isom}_p(\sigma^\ast E, \pi^\ast E).
  \end{displaymath}
\end{definition}

\begin{remark} \label{remark:geo_pts_GE}
  In Proposition~\ref{proposition:isom_group}, we equip $\widetilde{G}_E$ with a group structure and produce a homomorphism (of sheaves of groups) to $G$. Before giving the definition for general $T$-points, we describe here the operations on geometric points. 
  If $g \in G(\overline{k})$, then a $\overline{k}$-point over $g$ is given by 
  an isomorphism on $X$
  \begin{displaymath}
    \alpha : g^\ast E_{\bar{k}} \to E_{\bar{k}}.  
  \end{displaymath}
  Composition for $\overline{k}$-points reduces to 
  \begin{displaymath}
    (g_1,\alpha_1) \cdot (g_2,\alpha_2) = (g_1g_2, \alpha_2 \circ 
    g_2^\ast \alpha_1).
  \end{displaymath}
  The identity is just the identity map of $E$ over $e \in G(\bar{k})$. 
  Finally, the inverse is given by 
  \begin{displaymath}
    (g,\alpha)^{-1} = (g^{-1}, \left( g^{-1} \right)^\ast \alpha^{-1}). 
  \end{displaymath}
\end{remark}

\begin{definition}
  Let $\mathcal C$ be a fibered category over $\mathbf{Sch}_k$ and $E \in \mathcal C(X)$. The functor $\op{Aut}(E)$ has as $T$-points $\op{Aut}_{T \times X}(E_{T \times X})$ for a test scheme $T$ and projection $q : T \times X \to X$. 
\end{definition}

The following proposition establishes the group structure on $\widetilde
G _E$, the first step necessary in recognizing its role in controlling
linearizations of $E$. The proof is straight-forward but technical. As
such, we have relegated it to Appendix~\ref{appendix:proof_group_structure}. 

\begin{proposition} \label{proposition:isom_group}
  Let $\mathcal C$ be a fibered category over $\mathbf{Sch}_k$
  and $E \in \mathcal C(X)$. The Isom-functor 
  $\widetilde{G}_E$ admits a group structure. Furthermore, there are natural transformations 
  \begin{displaymath}
    \op{Aut}(E) \to \widetilde{G}_E 
    \to G 
  \end{displaymath}
  which are homomorphisms of sheaves of groups. When restricted to geometric points, the group operations are as claimed in Remark~\ref{remark:geo_pts_GE}.
\end{proposition}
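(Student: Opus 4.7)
The plan is to define the multiplication on $T$-points of $\widetilde{G}_E$ directly, verify the group axioms, and then read off the two homomorphisms.

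First, I would unpack what a $T$-point of $\widetilde{G}_E$ is. By the Cartesian square in the definition of the Isom functor, a $T$-point consists of a morphism $g:T\to G$ together with an isomorphism $\alpha:\sigma_g^\ast E\to \pi_T^\ast E$ in $\mathcal C(T\times X)$, where $\sigma_g:T\times X\to X$ is $(t,x)\mapsto g(t)\cdot x$ and $\pi_T:T\times X\to X$ is the projection. Given two such $(g_1,\alpha_1),(g_2,\alpha_2)\in\widetilde{G}_E(T)$, I would set $g:=g_1g_2$ using the group law of $G(T)$, and form the map $h:=\op{id}_T\times\sigma_{g_2}:T\times X\to T\times X$. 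Associativity of the $G$-action yields the identities $\sigma_{g_1}\circ h=\sigma_g$ and $\pi_T\circ h=\sigma_{g_2}$, so $h^\ast\alpha_1$ is an isomorphism $\sigma_g^\ast E\to\sigma_{g_2}^\ast E$, which can then be composed with $\alpha_2$ to produce an isomorphism $\sigma_g^\ast E\to\pi_T^\ast E$. Define
\[
(g_1,\alpha_1)\cdot(g_2,\alpha_2):=\bigl(g,\ \alpha_2\circ h^\ast\alpha_1\bigr).
\]

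Next, I would verify the group axioms. Associativity reduces to contemplating the two factorizations of the triple action $\sigma_{g_1g_2g_3}$ on $T\times X$: both composites of pullbacks agree because $\mathcal C$ is a pseudofunctor and the underlying scheme maps coincide by associativity of the $G$-action. The identity is $(e_T,\op{id})$, using $\sigma_{e_T}=\pi_T$, and the inverse of $(g,\alpha)$ is $\bigl(g^{-1},(\op{id}_T\times\sigma_{g^{-1}})^\ast\alpha^{-1}\bigr)$; both verifications are immediate from functoriality of pullback. Naturality in $T$ is automatic: a morphism $f:T'\to T$ acts as $(g,\alpha)\mapsto(g\circ f,(f\times\op{id}_X)^\ast\alpha)$, and pullback commutes with the operations above, so $\widetilde{G}_E$ is a sheaf of groups.

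Finally, I would exhibit the two homomorphisms. The forgetful map $\widetilde{G}_E\to G$ is the projection $(g,\alpha)\mapsto g$, a group homomorphism by construction. The map $\op{Aut}(E)\to\widetilde{G}_E$ sends $\varphi\in\op{Aut}_{T\times X}(\pi_T^\ast E)$ to $(e_T,\varphi)$; since $h=\op{id}_{T\times X}$ when $g_2=e_T$, multiplication in $\widetilde{G}_E$ restricts to ordinary composition of automorphisms, so this too is a homomorphism. For the geometric-point description, specializing to $T=\op{Spec}\overline{k}$ and $g\in G(\overline{k})$, the map $\sigma_g$ becomes the pointwise action of $g$ on $X_{\overline{k}}$, so $h^\ast\alpha_1=g_2^\ast\alpha_1$, the identity becomes $(e,\op{id}_E)$, and the inverse formula becomes $(g^{-1},(g^{-1})^\ast\alpha^{-1})$, matching Remark~\ref{remark:geo_pts_GE} verbatim. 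The main obstacle is simply the bookkeeping of coherence 2-isomorphisms in the pseudofunctor $\mathcal C$ when checking associativity; since every required scheme-level identity is an instance of the $G$-action axioms, nothing deeper enters.
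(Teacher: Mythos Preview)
Your proposal is correct and follows essentially the same approach as the paper: your composite $h=\op{id}_T\times\sigma_{g_2}$ is exactly the paper's $(1\times\sigma)\circ(1\times g_2\times 1)\circ(\Delta\times 1)$, and both arrive at the product formula $\alpha_2\circ h^\ast\alpha_1$ and the same inverse. The only point the paper handles more explicitly is the coherence bookkeeping you flag at the end: rather than tracking the $2$-isomorphisms directly, the paper invokes \cite[Theorem~3.45]{vistoli_stack} to replace $\mathcal C$ by an equivalent split fibered category, so that all pullback identities become literal equalities.
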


The following result makes plain exactly how $\widetilde{G}_E$ controls linearizations of $E$. 

\begin{lemma} \label{lem:linearization=section}
  The object $E$ admits a $G$-linearization if and only if the 
  homomorphism $\widetilde{G}_E \to G$ admits a section which is 
  also a homomorphism.  
\end{lemma}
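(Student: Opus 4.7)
The plan is to combine Proposition~\ref{proposition:linearization_equiv}, which identifies $G$-linearizations of $E$ with $G$-equivariant structures (i.e., isomorphisms $\phi: \pi^\ast E \to \sigma^\ast E$ on $G \times X$ satisfying the cocycle condition), with a direct unpacking of what a section of $\widetilde{G}_E \to G$ amounts to. So the task reduces to matching these two packages of data term for term.

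The first step is the bijection at the level of sheaves (ignoring the group structure). By definition a $T$-point of $\widetilde{G}_E$ is a pair $(g,\alpha)$ with $g \in G(T)$ and $\alpha:(\sigma^\ast E)_T \to (\pi^\ast E)_T$ an isomorphism on $T \times X$, and a section $s: G \to \widetilde{G}_E$ of the projection is precisely a $G$-point of the form $(\mathrm{id}_G, \alpha)$ for some isomorphism $\alpha: \sigma^\ast E \to \pi^\ast E$ on $G \times X$. Setting $\phi := \alpha^{-1}$, a (not-necessarily-homomorphic) section corresponds bijectively to the choice of isomorphism $\phi$ in the definition of an equivariant object.

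The second step is to show that under this bijection, the section $s$ being a group homomorphism is equivalent to $\phi$ satisfying the cocycle condition. The transparent way to see this is via Remark~\ref{remark:geo_pts_GE}: at geometric points, a homomorphic section assigns to each $g \in G(\bar k)$ an isomorphism $\alpha_g: g^\ast E \to E$ with $\alpha_{g_1 g_2} = \alpha_{g_2} \circ g_2^\ast \alpha_{g_1}$, which is exactly the fiberwise form of the cocycle relation. Since both conditions are equalities of morphisms in a sheaf and so may be checked on a universal test object, one tests on $T = G \times G$ with $g_1, g_2$ the two projections. The resulting equation is an equality of isomorphisms of pullbacks of $E$ to $G \times G \times X$, which is exactly the commutativity of the cocycle diagram.

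The main obstacle is purely bookkeeping: one must confirm that the tower of base-change identifications appearing in the composition formula of Proposition~\ref{proposition:isom_group} matches the three pullbacks $\pi_{23}^\ast \phi$, $(\mathrm{id}_G \times \sigma)^\ast \phi$, and $(m \times \mathrm{id}_X)^\ast \phi$ from the cocycle diagram. This alignment is automatic, since both are driven by the same compatibilities $\sigma \circ (m \times \mathrm{id}_X) = \sigma \circ (\mathrm{id}_G \times \sigma)$ and $\pi \circ (\mathrm{id}_G \times \sigma) = \sigma \circ \pi_{23}$ that underlie both the group law on $\widetilde{G}_E$ and the definition of an equivariant object, so once the symbols are unwound the two conditions agree. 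This yields both implications at once.
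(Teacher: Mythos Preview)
Your proposal is correct and follows essentially the same route as the paper's proof: identify a section of $\widetilde{G}_E \to G$ over $\mathrm{id}_G$ with an isomorphism $\alpha:\sigma^\ast E \to \pi^\ast E$, and then observe that the homomorphism condition on the section is precisely the cocycle condition. The paper compresses this into two sentences, while you make explicit the appeal to Proposition~\ref{proposition:linearization_equiv} and the reduction to the universal test object $T=G\times G$; this added care is fine but not a different strategy.
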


\begin{proof}
  A $G$-point of $\phi : G \to \widetilde{G}_E$ over the 
  identity $1 : G \to G$ is an isomorphism $\alpha : 
  \sigma^\ast E \to \pi^\ast E$. The map $\phi$ is a homomorphism 
  if and only if $\alpha$ satisfies the cocycle condition. 
\end{proof}



\begin{remark}[Representability]
The question of representability of $\widetilde{G}_E$ by an affine group scheme is treated in more detail in Appendix~\ref{appendix:representability}. We believe these considerations are of independent interest, although they are not used directly in our main results. In summary, a result of \cite{stacks-project} is enough to establish
representability of $\widetilde{G}_E$ in the case of a coherent sheaf (Proposition~\ref{prop:rep_GE_coh}). For exceptional objects (or more generally, perfect universally gluable objects) on smooth projective schemes, results of Lieblich \cite{Lieblich} help in showing that it suffices to check that $g^*E \cong E$ for any geometric point $g \in G(\overline k)$ (Proposition~\ref{prop:pug_rep}).
\end{remark}

\begin{lemma} \label{lem:oplus_pushout}
  Assume that $E$ is indecomposable and $\op{Aut}(E)$ is abelian. 
  Then there is a natural isomorphism 
  \begin{displaymath}
    \left( \operatorname{Aut}(E^{\oplus r}) \times \widetilde{G}_E \right) 
    / \op{Aut}(E) \cong \widetilde{G}_{E^{\oplus r}}. 
  \end{displaymath}
\end{lemma}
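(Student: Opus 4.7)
The plan is to construct a natural map
$\Phi \colon \operatorname{Aut}(E^{\oplus r}) \times \widetilde{G}_E \to \widetilde{G}_{E^{\oplus r}}$
by the formula $(\phi, (g, \alpha)) \mapsto (g, \phi \circ \alpha^{\oplus r})$ on $T$-points, where $\alpha^{\oplus r} \colon \sigma^\ast E^{\oplus r} \to \pi^\ast E^{\oplus r}$ is the direct-sum isomorphism induced by $\alpha$; one then identifies the kernel (with respect to an $\operatorname{Aut}(E)$-action on the source) with the diagonally embedded $\operatorname{Aut}(E)$ and verifies surjectivity, so that the induced map on the quotient is an isomorphism of sheaves of groups.

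First, I would verify that $\Phi$ is a homomorphism of sheaves of groups. Indecomposability of $E$ together with abelianness of $\operatorname{Aut}(E)$ forces $\operatorname{End}(E)$ to be commutative, so $\Delta_r(\operatorname{Aut}(E))$ is the center of $\operatorname{Aut}(E^{\oplus r}) \cong \operatorname{GL}_r(\operatorname{End}(E))$; moreover, the image of $\operatorname{Aut}(E) \hookrightarrow \widetilde{G}_E$ is central in $\widetilde{G}_E$ via the canonical identification $\operatorname{Aut}(g^\ast E) \cong \operatorname{Aut}(E)$ that abelianness provides. Combined with the compatibility of $\alpha \mapsto \alpha^{\oplus r}$ with the multiplication formula from Remark~\ref{remark:geo_pts_GE}, this ensures $\Phi$ respects products and that the left-hand side of the claimed isomorphism carries a well-defined group structure.

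Next, I would identify the kernel and verify surjectivity. A pair $(\phi, (g, \alpha))$ maps to the identity precisely when $g = e$ and $\phi = \Delta_r(\alpha)^{-1}$ with $\alpha \in \operatorname{Aut}(E)$, so the kernel is exactly the image of $\operatorname{Aut}(E)$ via $\psi \mapsto (\Delta_r(\psi)^{-1}, (e, \psi))$. For surjectivity, given $(g, \beta) \in \widetilde{G}_{E^{\oplus r}}$, pulling $\beta$ back to any geometric point yields $(g^\ast E)^{\oplus r} \cong E^{\oplus r}$; since $E$ is indecomposable, the Krull--Schmidt theorem in $\operatorname{D^b}(X_{\overline{k}})$ gives $g^\ast E \cong E$ geometrically, and working fppf-locally on $T$ one lifts this to $\alpha \colon \sigma^\ast E \to \pi^\ast E$ over $g$. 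Then $\phi := \beta \circ (\alpha^{\oplus r})^{-1} \in \operatorname{Aut}(E^{\oplus r})$ furnishes the required preimage.

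The main obstacle will be promoting the pointwise Krull--Schmidt argument to a genuine fppf-local existence statement for $\alpha$ on $T$, together with the bookkeeping required to verify centrality of $\operatorname{Aut}(E) \hookrightarrow \widetilde{G}_E$ and thereby equip the left-hand side with its group structure; both ingredients rely essentially on the abelianness of $\operatorname{Aut}(E)$.
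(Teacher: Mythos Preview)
Your approach is essentially the paper's: the paper packages the same map into a commutative diagram with rows $1 \to A_r \to (A_r \times \widetilde{G}_E)/A_1 \to G \to 1$ and $1 \to A_r \to \widetilde{G}_{E^{\oplus r}} \to G \to 1$, with identity maps on the outer terms, and concludes by the five lemma, whereas you compute the kernel explicitly and argue surjectivity via Krull--Schmidt. Your concern about surjectivity is precisely the step the paper's terse proof leaves implicit when it asserts both rows are short exact.
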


\begin{proof}
  Denote
    $H := (\op{Aut}(E^{\oplus r}) \times \widetilde{G}_E)/\op{Aut}(E).$ Here $\op{Aut}(E)$ is included in the first factor via the diagonal embedding $\op{Aut}(E) \to \op{Aut}(E^{\oplus r}).$
  We have an extension 
  \begin{displaymath}
    1 \to \op{Aut}(E^{\oplus r}) \to H \to G \to 1
  \end{displaymath}
  which fits into a commutative diagram 
  \begin{center}
    \begin{tikzpicture}
      \node (a) at (-4,1) {$1$};
      \node (b) at (-2,1) {$\op{Aut}(E^{\oplus r})$}; 
      \node (c) at (0,1) {$H$};
      \node (d) at (2,1) {$G$};
      \node (e) at (4,1) {$1$};
      \node (f) at (-4,-1) {$1$};
      \node (g) at (-2,-1) {$\op{Aut}(E^{\oplus r})$};
      \node (h) at (0,-1) {$\widetilde{G}_{E^{\oplus r}}$};
      \node (i) at (2,-1) {$G$};
      \node (j) at (4,-1) {$1$};
      \draw[->] (a) -- (b);
      \draw[->] (b) -- (c);
      \draw[->] (c) -- (d);
      \draw[->] (d) -- (e);
      \draw[->] (b) -- (g);
      \draw[->] (c) -- (h);
      \draw[->] (d) -- (i);
      \draw[->] (f) -- (g);
      \draw[->] (g) -- (h);
      \draw[->] (h) -- (i);
      \draw[->] (i) -- (j);
    \end{tikzpicture}
  \end{center}
  with the two outer vertical maps being isomorphisms. Thus, the middle 
  one is also an isomorphism. 
\end{proof}

\begin{proposition} \label{prop:lin_up_to_sum}
  Let $X$ be smooth and projective. Assume that $E$ is 
  \'etale-exceptional and $g^\ast E_{\bar{k}} \cong E_{\bar{k}}$ for each $g \in 
  G(\overline{k})$. There is some $r \geq 1$ such 
  that $E^{\oplus r}$ is $G$-linearizable.
\end{proposition}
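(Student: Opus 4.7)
The plan is to follow the blueprint of Polishchuk \cite[Lemma 2.2]{Polishchuk_unity}, extending it from the $k$-exceptional to the étale-exceptional case. By Lemma \ref{lem:linearization=section}, it suffices to find $r \geq 1$ such that the projection $\widetilde{G}_{E^{\oplus r}} \to G$ admits a group-theoretic section. Writing $L = \operatorname{End}(E)$ for the separable field extension, we have $A_1 = \operatorname{Aut}(E) = R_{L/k}\gm$, a torus, and $A_r = R_{L/k}\operatorname{GL}_r$. By Lemma \ref{lem:oplus_pushout}, $\widetilde{G}_{E^{\oplus r}}$ is the pushout of the extension $1 \to A_1 \to \widetilde{G}_E \to G \to 1$ along the diagonal $\Delta_r\colon A_1 \hookrightarrow A_r$, so the task is to choose $r$ so that this pushout splits.

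I would first argue that the extension $1 \to A_1 \to \widetilde{G}_E \to G \to 1$ is central. From the formulas in Remark \ref{remark:geo_pts_GE} one computes
\[
(g,\alpha)\,(e,\beta)\,(g,\alpha)^{-1} = \bigl(e,\, (g^{-1})^{\ast}(\alpha^{-1}\beta\alpha)\bigr);
\]
commutativity of $A_1$ kills the inner factor $\alpha^{-1}\beta\alpha$, and the hypothesis $g^{\ast}E_{\overline{k}} \cong E_{\overline{k}}$ implies that $g^{\ast}$ acts as a $k$-algebra automorphism of $L = \operatorname{End}(E)$, which takes values in the finite (hence discrete) group $\operatorname{Aut}_{k\text{-alg}}(L)$ and thus is trivial on the identity component of $G$. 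After a reduction to $G^\circ$ (or a separate accounting of the components of $G$ that permute the simple factors of $L\otimes_k\overline{k}$), the extension is central and is therefore classified by a class
\[
\xi \in H^2_{\mathrm{fppf}}(G, R_{L/k}\gm) \cong H^2_{\mathrm{fppf}}(G_L, \gm) \subseteq \Br'(G_L),
\]
using Shapiro's lemma for Weil restriction. Since $G_L$ is of finite type over $L$, the cohomological Brauer group is torsion, so $\xi$ has a finite index $n$.

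The concluding step is to take $r$ divisible by the index of $\xi$. Then $\xi$ is represented by an Azumaya algebra of degree $r$ on $G_L$, or equivalently a rank-$r$ twisted vector bundle on the $\gm$-gerbe over $G_L$ associated to $\xi$. Unwinding via Shapiro's lemma, this is exactly the datum that splits the pushout extension along $\Delta_r$, producing the desired group-theoretic section of $\widetilde{G}_{E^{\oplus r}} \to G$ and hence a linearization of $E^{\oplus r}$. The main obstacle is this last step: translating between central extensions, gerbes, and Azumaya algebras in the Weil-restricted setting, and verifying that the resulting splitting is a homomorphism of sheaves of groups rather than a mere set-theoretic section. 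This is essentially the content of Polishchuk's argument in the case $L = k$, and Shapiro's lemma should transport that argument faithfully to the étale case once the centrality above has been justified.
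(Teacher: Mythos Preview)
Your strategy diverges from the paper's after the shared opening reduction via Lemmas~\ref{lem:linearization=section} and~\ref{lem:oplus_pushout}. The paper does not invoke centrality, $H^2$, or Brauer groups at all. Instead, it observes that splitting the pushout is equivalent to producing a homomorphism $\psi\colon \widetilde{G}_E \to A_r = R_{L/k}\operatorname{GL}_r$ with $\psi|_{A_1}=\Delta_r$, and then uses the adjunction for Weil restriction to convert this into finding a homomorphism $(\widetilde{G}_E)_L \to \operatorname{GL}_{r,L}$ restricting to the scalar embedding on $\mathbb{G}_{m,L}$. That map is constructed by hand from the regular representation: since $\mathbb{G}_{m,L}\hookrightarrow(\widetilde{G}_E)_L$ is a closed embedding of affine groups, $L[\widetilde{G}_E]$ contains an element of $\mathbb{G}_m$-weight~$1$, and one takes any finite-dimensional $(\widetilde{G}_E)_L$-subrepresentation containing it. This is elementary representation theory; the paper's approach buys you freedom from any cohomological classification of extensions.

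Your route has a genuine gap at the line $\xi \in H^2_{\mathrm{fppf}}(G, R_{L/k}\gm) \subseteq \Br'(G_L)$. Central extensions of an algebraic group $G$ by a commutative group scheme $A$ are \emph{not} classified by the fppf sheaf cohomology $H^2$ of the underlying scheme $G$; that cohomology group is the cohomological Brauer group of the variety $G$, which is a different object from the group of (multiplicative) central extensions. What the extension does give you is its underlying $A_1$-torsor $\widetilde{G}_E\to G$, a class in $H^1(G,A_1)\cong\operatorname{Pic}(G_L)$, and it is finiteness of $\operatorname{Pic}$ of a linear algebraic group, not torsion in a Brauer group, that supplies the relevant finiteness in Polishchuk's $L=k$ argument. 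Even after correcting the target, you would still owe an argument that killing the $n$-th Baer power (pushout along $[n]\colon\mathbb{G}_m\to\mathbb{G}_m$) forces splitting of the pushout along $\Delta_n\colon\mathbb{G}_m\hookrightarrow\operatorname{GL}_n$; these are different homomorphisms and the implication is not formal. Your centrality computation is correct and worth keeping --- it is also what makes the weight-$1$ subspace in the paper's argument $(\widetilde{G}_E)_L$-stable --- but the subsequent cohomological packaging does not go through as written.
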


\begin{proof}
  To split the extension, 
  \begin{displaymath}
    1 \to \op{Aut}(E^{\oplus r}) \to (\op{Aut}(E^{\oplus r}) \times \widetilde{G}_E)/\op{Aut}(E) \to G 
    \to 1 
  \end{displaymath}
  it suffices to locate a map 
 $\widetilde{G}_E \to \op{Aut}(E^{\oplus r})$
  filling in the diagram 
  \begin{center}
    \begin{tikzpicture}
      \node (a1) at (-2,1) {$\op{Aut}(E)$};
      \node (g) at (0,1) {$\widetilde{G}_E$};
      \node (ar) at (-2,-1) {$\op{Aut}(E^{\oplus r})$};
      \draw[->] (a1) -- (g);
      \draw[->] (a1) -- (ar);
      \draw[->,dashed] (g) -- (ar);
    \end{tikzpicture}
  \end{center}
  From Lemma~\ref{lem:oplus_pushout} and Lemma
  ~\ref{lem:linearization=section}, we see that filling the above 
  diagram also provides a $G$-linearization for $E^{\oplus r}$. 
If $E$ is \'etale-exceptional have $\op{Aut}(E) = R_{L/k} \mathbb{G}_m$
  for some extension $L/k$. We are looking to fill 
  \begin{center}
    \begin{tikzpicture}
      \node (a1) at (-2,1) {$R_{L/k}\mathbb{G}_m$};
      \node (g) at (0,1) {$\widetilde{G}_E$};
      \node (ar) at (-2,-1) {$R_{L/k}\op{GL}_r$};
      \draw[->] (a1) -- (g);
      \draw[->] (a1) -- (ar);
      \draw[->,dashed] (g) -- (ar);
    \end{tikzpicture}
  \end{center}
  From functoriality of Weil restriction, we reduce to filling in 
  \begin{center}
    \begin{tikzpicture}
      \node (a1) at (-2,1) {$\mathbb{G}_{m,L}$};
      \node (g) at (0,1) {$\left( \widetilde{G}_E \right)_L$};
      \node (ar) at (-2,-1) {$\op{GL}_{r,L}$};
      \draw[->] (a1) -- (g);
      \draw[->] (a1) -- (ar);
      \draw[->,dashed] (g) -- (ar);
    \end{tikzpicture}
  \end{center}
  Since the map $\mathbb{G}_{m,L} \to \left( \widetilde{G}_E \right)_L$ 
  is an embedding, we can find an element $z \in L[\widetilde{G}_E]$ 
  of weight $1$ with respect to the induced $\mathbb{G}_{m,L}$-action. 
  The element $z$ lies in some finite-dimensional 
  $\left( \widetilde{G}_E \right)_L$-representation, which gives us 
  our desired map 
    $\left( \widetilde{G}_E \right)_L \to \op{GL}_{r,L}.$ 
\end{proof}

\begin{lemma} \label{lem:G_conn_E_fixed}
  Let $E \in \mathcal D^b_{\op{pug}}(X)$.   
  If $G$ is connected and $\op{Ext}^1_X (E,E) = 0$, then 
  $g^\ast E_{\bar{k}} \cong E_{\bar{k}}$ for any $g \in G(\overline{k})$.  
\end{lemma}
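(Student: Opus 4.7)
The plan is to show that
\[
U := \{g \in G(\overline{k}) \mid g^{*}E_{\overline{k}} \cong E_{\overline{k}}\}
\]
is all of $G(\overline{k})$. It is immediate that $U$ is closed under the group operations of $G(\overline{k})$: if $g^{*}E\cong E$ and $h^{*}E\cong E$, then $(gh)^{*}E = h^{*}g^{*}E \cong h^{*}E \cong E$, and similarly $(g^{-1})^{*}E \cong E$. Since $e\in U$, once I exhibit a Zariski-open neighborhood of $e$ in $G$ contained in $U$, the subgroup $U$ will be an open subgroup of the connected algebraic group $G$; and every open subgroup of a connected algebraic group is the whole group, its complement being a disjoint union of open cosets.

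To exhibit such a neighborhood, I would study the Isom functor
\[
I_E := \op{Isom}_p(\sigma^{*}E,\,\pi^{*}E)
\]
over $G$, where $p\colon G\times X\to G$ is the projection. As in the proof of Proposition~\ref{prop:rep_GE_coh}, combining Proposition~\ref{prop:rep_h0} with the vanishing criterion of Proposition~\ref{prop:pug_check}, and using that the isomorphism locus is open in the Hom functor, one sees that $I_E$ is representable by an algebraic space affine over $G$. Its geometric fiber over $g\in G(\overline{k})$ is either empty or an $\Aut(E)$-torsor of isomorphisms $g^{*}E_{\overline{k}}\to E_{\overline{k}}$, so the image of $I_E\to G$ on geometric points is exactly $U$. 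It thus suffices to show $I_E\to G$ is smooth at some point lying over $e$.

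This is where the hypothesis $\op{Ext}^{1}_{X}(E,E) = 0$ enters. Given a first-order deformation $\tilde e\in G(\overline{k}[\epsilon])$ of $e$ and an isomorphism $\alpha_0\in\Aut(E_{\overline{k}})$, the obstruction to lifting $\alpha_0$ to an isomorphism $\tilde\alpha\colon\tilde e^{*}E\to E$ of perfect objects on $X_{\overline{k}[\epsilon]}$ lies in $\op{Ext}^{1}_{X_{\overline{k}}}(E_{\overline{k}},E_{\overline{k}})$, via the standard deformation theory of morphisms between perfect complexes (cf.~\cite{Lieblich}). The hypothesis makes this group vanish, so $\alpha_0$ lifts; hence $I_E\to G$ is formally smooth at $(e,\alpha_0)$, and combined with finite presentation this yields smoothness, and therefore openness of the image near $e$.

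The main obstacle is verifying the obstruction-theoretic step rigorously in the generality of the PUG setting, where only $\op{Ext}^{<0}$ is assumed to vanish and higher $\op{Ext}^{i}$ may be nonzero; but this is standard derived deformation theory and should follow from the framework of \cite{Lieblich}. A more conceptual variant instead observes that $\mathcal D^{b}_{\op{pug}}(X)$ is Artin with tangent space $\op{Ext}^{1}(E,E)=0$ at $E$, so that the orbit map $G\to \mathcal D^{b}_{\op{pug}}(X)$, $g\mapsto g^{*}E$, has zero differential at $e$, forcing the orbit of $E$ to be zero-dimensional and hence, by connectedness of $G$, a single point. Everything else — the subgroup argument, the representability of $I_E$, and the passage from formal smoothness to openness of the image — is formal once the obstruction computation is in hand.
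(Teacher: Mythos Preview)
Your ``more conceptual variant'' at the very end is precisely the paper's proof: one considers the map $G \to \mathcal{D}^{b}_{\op{pug}}(X)$ classifying $\sigma^{*}E$, observes that the tangent space of the target stack at $E$ is $\op{Ext}^{1}_{X}(E,E)=0$ by \cite[Theorem~3.1.1]{Lieblich}, and concludes from connectedness of $G$ that the map is constant. The paper attributes the argument to \cite[Lemma~2.2]{Polishchuk_unity}.

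Your primary approach via the Isom scheme is also workable but more circuitous, and there is one circularity worth flagging. To represent $I_{E}$ over all of $G$ via Proposition~\ref{prop:rep_h0} and Proposition~\ref{prop:pug_check}, you need $\op{Ext}^{i}_{\overline{X}}(g^{*}E_{\overline{k}},E_{\overline{k}})=0$ for all $i<0$ and \emph{all} $g\in G(\overline{k})$; but this is exactly what the paper deduces from $g^{*}E_{\overline{k}}\cong E_{\overline{k}}$, which is what you are trying to prove. The fix is easy: first restrict to an open neighborhood $V\ni e$ in $G$ on which the finitely many negative Ext groups vanish (upper semicontinuity, using that they vanish at $e$), represent $I_{E}|_{V}$ there, and then run your smoothness/openness argument to produce an open subset of $V$, hence of $G$, inside $U$. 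After that, your subgroup argument finishes. Note also that formal smoothness requires the lifting property for all square-zero thickenings, not only $\overline{k}[\epsilon]$; the same $\op{Ext}^{1}$-vanishing handles every thickening, but this should be stated explicitly.
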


\begin{proof}
  This is contained in the proof of \cite[Lemma 2.2]{Polishchuk_unity}. We recap it for convenience. We have a map $G \to \mathcal D^b_{\op{pug}}(X)$ corresponding to the sheaf $\sigma^\ast E$ on $G \times X$. The tangent space at $E$ in 
  $\mathcal D^b_{\op{pug}}(X)$ is $\op{Ext}^1_X (E,E)$ \cite[Theorem 3.1.1]{Lieblich}. Since we assumed 
  that this is zero, the map $G^0 \to \mathcal D^b_{\op{pug}}(X)$ is 
  constant. The conclusion follows from the assumption that $G^0 = G$.
\end{proof}

\section{\'Etale-exceptional objects on neutral toric varieties} 
\label{section:TCI}

In this section, we identify a particular class of exceptional objects 
on split toric varieties which descend to \'etale-exceptional objects 
on any neutral model. This is accomplished using the linearization and descent results of Section~\ref{section:linearizing_stuff}. We check that Castravet and Tevelev's 
collection given in Subsection~\ref{subsection:a3} is of this particular form. 

\subsection{TCI-type collections on toric varieties}

Let $X(\Sigma)$ be a split smooth projective toric variety associated to a fan $\Sigma$. Let $R$ denote the Cox ring of $X(\Sigma)$, so that
\begin{displaymath}
  R \cong k[x_\rho \mid \rho \in \Sigma(1)]. 
\end{displaymath}
The Cox ring is graded by $\op{Pic}(X(\Sigma))$, where the weight of $x_\rho$ is $\mathcal O(D_\rho) \in \op{Pic}(X(\Sigma))$. We will identify weights with elements of $\op{Pic}(X(\Sigma))$. 

The finite group $\op{Aut}(\Sigma)$ acts via homogeneous automorphisms on $R$. For a weight $\chi$ and graded $R$-module $M$, we let $M(\chi)$ be the graded $R$-module with $M(\chi)_\psi = M_{\chi + \psi}$. 

Recall that $X(\Sigma) \cong U/\cd{\op{Pic}(X(\Sigma))}$ for a quasi-affine open subset $U$ of $\op{Spec} R$. As such, we have a restriction functor 
\begin{displaymath}
  j^\ast : \op{D}^{\op{b}}_{\op{Pic}}(\mathbb{A}^{\Sigma(1)}) \to \op{D^b}(X). 
\end{displaymath} 

\begin{definition}\label{def:TCI}
 We say $X(\Sigma)$ has an exceptional collection of \emph{toric complete intersection type} or \emph{TCI-type} if there exists a set of graded $R$-modules $F_1,\ldots,F_t$ such that 
 \begin{itemize}
   \item for each $1 \leq s \leq t$
 \begin{displaymath}
   F_s = R(\chi_s)/(x_l \mid l \in I_s) 
 \end{displaymath}
 for some $\chi_s \in \op{Pic}(X(\Sigma))$ and $I_s \subseteq \Sigma(1)$,
 \item the set $F_1,\ldots,F_t$ is $\op{Aut}(\Sigma)$-stable, and
 \item the set $j^\ast F_1,\ldots, j^\ast F_t$ forms a $k$-exceptional collection of $\op{D^b}(X(\Sigma))$. 
 \end{itemize}
\end{definition}

\begin{proposition} \label{proposition:etale_collection}
  Assume $L/k$ is Galois. Let $X(\Sigma)$ be a split smooth projective 
  toric variety over $L$ with fan $\Sigma$ and $X$ a neutral smooth 
  projective toric $T$-variety over $k$ such that 
  $X(\Sigma) \cong X_L$. If $X(\Sigma)$ 
  possesses a full exceptional collection of TCI-type, then $X$ 
  possesses a full \'etale exceptional collection. 
\end{proposition}

\begin{proof}
  The action of $G := \op{Gal}(L/k)$ induces an action on the fan 
  $\Sigma$ and hence a homomorphism $G \to \op{Aut}(\Sigma)$. Through 
  this homomorphism, we have actions of $G$ on both 
  the Picard group $\op{Pic}(X(\Sigma))$ and on the set of rays $\Sigma(1)$. 

  The Galois group $G$ acts on the spectrum of the Cox ring 
  $\op{Spec} R = \mathbb{A}_L^{\Sigma(1)}$ coming from extending 
  the action of $G$ on $\Sigma(1)$ linearly over $k$ and 
  then skew-linearly, via $G$, over $L/k$. 

  For a graded $R$-module of the form $R(\chi)/(x_l \mid l \in I)$ 
  with $I \subseteq \Sigma(1)$, we have a canonical map  
  \begin{displaymath}
    R (g \cdot \chi) \to g \cdot \left( R(\chi)/(x_l \mid l \in I)\right)
  \end{displaymath}
  given by $1 \mapsto 1$. This induces an isomorphism
  \begin{displaymath}
   \sigma_g : R (g \cdot \chi)/(x_l \mid l \in g \cdot I) 
    \to g \cdot \left( R(\chi)/(x_l \mid l \in I)\right).
  \end{displaymath} 
  If $g_1 \cdot \chi = g_2 \cdot \chi$ and $g_1 \cdot I = g_2 \cdot I$, 
  then we have equality 
  \begin{displaymath}
    R (g_1 \cdot \chi)/(x_l \mid l \in g \cdot I) = 
    R (g_2 \cdot \chi)/(x_l \mid l \in g \cdot I). 
  \end{displaymath}
  Thus, we have an isomorphism of graded modules
  \begin{displaymath}
    \sigma_{g_2} \sigma_{g_1}^{-1} : 
    g_1 \cdot \left( R(\chi)/(x_l \mid l \in I)\right) \to 
    g_2 \cdot \left( R(\chi)/(x_l \mid l \in I)\right). 
  \end{displaymath}
  Conversely, if there is an isomorphism of graded modules
  \begin{displaymath}
    g_1 \cdot \left( R(\chi)/(x_l \mid l \in I)\right) \cong  
    g_2 \cdot \left( R(\chi)/(x_l \mid l \in I)\right),
  \end{displaymath}
  then we must necessarily have $g_1 \chi = g_2 \chi$ and 
  $g_1 \cdot I = g_2 \cdot I$. 

  We can partition $F_1,\ldots,F_l$ into its orbits, up to 
  isomorphism, under the action of $G$. It suffices to check that 
  the sum of objects in an orbit descends to to an \'etale 
  exceptional object. We may assume, after relabeling, that 
  $F_1, \ldots, F_l$ is an orbit. Write 
  \begin{displaymath}
    F := F_1 = R(\chi)/(x_l \mid l \in I)
  \end{displaymath}
  and let $H$ be the subgroup of $G$ stablizing both $\chi$ and 
  $I$. Note that $H$ can also be described as the stabilizer 
  of $F_1$ up to isomorphism. Let $g_1,\ldots,g_l$ be a choice 
  of representatives for $G/H$ with $g_1 = 1$. Let 
  \begin{displaymath}
    \tau : G \to S_l = \op{Aut}(G/H).
  \end{displaymath}
  denote the permutation representation furnished by the left 
  action of $G$ on $G/H$. After relabeling, we may write 
  \begin{displaymath}
    F_j = g_j \cdot F. 
  \end{displaymath}
  Then,
  \begin{displaymath}
    g \cdot F_j = g g_j \cdot F = g_{\tau_g(j)} h^g_j \cdot F
  \end{displaymath}
  for a unique $h^g_j \in H$. We have isomorphism 
  \begin{displaymath}
    \psi_g := \oplus g_{\tau_g(j)} \sigma^{-1}_{h^g_j} : 
    g \cdot \left( \bigoplus F_i \right) \to \bigoplus F_i
  \end{displaymath}
  which despite its cumbersome notation is the map that extends 
  the function assigning $1 \in g \cdot F_j$ to $1 \in 
  F_{\tau_g(j)}$. From this description, we see that 
  \begin{displaymath}
    \phi_{g_1} \phi_{g_2} = \phi_{g_1 g_2}.
  \end{displaymath}
  Thus, $\phi$ provides an equivariant structure for $\oplus F_i$. 
  From Galois descent, there exists a module $E$ with $E_L = 
  \oplus F_i$ and with $\op{End}_X(j^\ast E)$ being the 
  field extension determined by the $G$-set $\tau$. 
\end{proof}

\begin{remark}
 Proposition~\ref{proposition:etale_collection} makes clear the difference between a full \'etale-exceptional collection of TCI-type on $X$ and a full $\op{Aut}(\Sigma)$-stable exceptional collection consisting of restrictions of line bundles to intersections of toric divisors. 
 
 Given an object of the form $L|_{D_1 \cap \cdots D_t}$ on $\overline{X}$, we can lift it to $R(\chi)/(x_1,\ldots,x_t)$. Let $H \leq \op{Aut}(\Sigma)$ be the stabilizer of the subset $\{1,\ldots,t\} \subset \Sigma(1)$. Then for each $h \in H$, $h \cdot \chi = \chi + \chi_h$. This gives a class $(\chi_h) \in H^1(H, \op{Ker} i^\ast)$, where $i^\ast : \op{Pic} (\overline{X}) \to \op{Pic}(D_1 \cap \cdots \cap D_t)$ is the restriction map. We can promote an $\op{Aut}(\Sigma)$-stable collection on $\overline{X}$ to an \'etale-exceptional collection of TCI-type on $X$ if and only if $(\chi_h) = 0$ for all objects. 
\end{remark}

The previous proposition produces a full \'etale exceptional collection on the neutral form
once we locate a TCI-type collection on the split form. We can 
leverage our knowledge of $\ICP(k,T)$ to transport this to an 
\'etale collection on forms without $k$-points. First, we record 
that \'etale exceptional objects linearizable up to sums provide 
normalized cohomological invariants. 

\begin{proposition} \label{proposition:exptoinv}
 Assume that $X$ is a smooth and projective variety with an action of a 
 linear algebraic group $G$. Let $E$ be an \'etale-exceptional object 
 in $\rm{D^b}(X)$ with $E^{\oplus r}$ linearizable for $r \geq 1$. Set $L = \op{End}_X(E)$. 
 The map 
 \begin{align*}
  \varphi_E: H^1(-,G) & \to H^2(-,R_{L/k} \mathbb{G}_m) \\
  U & \mapsto \left[ \op{End}_{\twist{X}{U}}(\Psi_U(E^{\oplus r})) \right]
 \end{align*}
 is a degree $2$ normalized cohomological invariant. 
\end{proposition}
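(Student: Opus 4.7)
The plan is to apply Proposition~\ref{prop:twist_object_twists_end} to rewrite $\op{End}_{\twist{X}{U}}(\Psi_U(E^{\oplus r}))$ as a twist of the matrix algebra $M_r(L) = \op{End}_X(E^{\oplus r})$, show that this twist is a central simple $L$-algebra of degree $r$, and then invoke Shapiro's lemma to identify its Brauer class with an element of $H^2(k, R_{L/k}\mathbb{G}_m) \cong \Br(L)$. Fix a $G$-linearization of $E^{\oplus r}$, which exists by hypothesis; by Proposition~\ref{proposition:linearization_equiv} this equips $M_r(L)$ with a $G$-action by $k$-algebra automorphisms. Proposition~\ref{prop:twist_object_twists_end} then yields a natural isomorphism
\begin{equation*}
  \op{End}_{\twist{X}{U}}(\Psi_U(E^{\oplus r})) \cong \twist{M_r(L)}{U}
\end{equation*}
for any right $G$-torsor $U$.

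To land naturally in $H^2(-, R_{L/k}\mathbb{G}_m)$, I next verify that the induced $G$-action on the center $L \subseteq M_r(L)$ is trivial, so that $\twist{M_r(L)}{U}$ is central simple over $L$ (and not over some form of $L$). Because $L/k$ is separable, $\Aut(L/k)$ is a finite constant group scheme, so any algebraic $G$-action on $L$ factors through $\pi_0(G)$; when $G$ is connected, the action is automatically trivial. More generally, one checks triviality directly from the fact that the linearization isomorphism $g^\ast E \cong E$ is a morphism in the $k$-linear category $\op{D^b}(X)$, so conjugation by it on the central subalgebra $\op{End}_X(E) = L$ is trivial. Granting this, $\twist{M_r(L)}{U}$ is a central simple $L$-algebra of degree $r$, and its Brauer class lies in $\Br(L) = H^2(L, \mathbb{G}_m) \cong H^2(k, R_{L/k}\mathbb{G}_m)$ by Shapiro's lemma.

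It then remains to verify the three properties of a normalized cohomological invariant. For independence of the chosen linearization, any two linearizations yield $G$-actions on $M_r(L)$ that differ by inner automorphisms by elements of $\Aut(E^{\oplus r}) = \GL_r(L)$, and such inner differences preserve the Brauer class of the twist. Naturality in field extensions follows from the compatibility of $\Psi_U$, the twisting construction, and $\op{End}$ with base change along a field extension. Normalization: for the trivial torsor $U = G$, the canonical identifications $\twist{X}{G} \cong X$ and $\Psi_G(E^{\oplus r}) \cong E^{\oplus r}$ give endomorphism algebra $M_r(L)$ itself, which is split and hence Brauer-trivial. The adjective ``degree $2$'' simply records that the target is an $H^2$.

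The principal anticipated obstacle is the triviality of the induced $G$-action on $L$; without it, $\twist{M_r(L)}{U}$ would be central simple over the possibly nontrivial form $\twist{L}{U}$ of $L$, and the construction would not naturally land in $H^2(k, R_{L/k}\mathbb{G}_m)$. Once that point is settled, the rest is a routine combination of Proposition~\ref{prop:twist_object_twists_end} with the Brauer--Shapiro dictionary.
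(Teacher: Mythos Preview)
The paper's own proof is a two-sentence sketch: it simply asserts that $\varphi_E$ is a natural transformation of functors and observes that normalization follows from $E$ being \'etale-exceptional (so that at the trivial torsor the endomorphism algebra is $M_r(L)$, which is Brauer-trivial). Your write-up supplies considerably more detail --- invoking Proposition~\ref{prop:twist_object_twists_end} to identify the endomorphism algebra with $\twist{M_r(L)}{U}$, checking independence of the chosen linearization, and making the Shapiro identification $H^2(-,R_{L/k}\mathbb{G}_m)\cong\Br(L\otimes_k -)$ explicit --- and these additions are genuinely useful.

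There is, however, a real gap in your ``more generally'' argument that the induced $G$-action on the center $L\subset M_r(L)$ is trivial. The action of a geometric point $g$ on $f\in M_r(L)$ is $f\mapsto \phi_g\circ g^\ast(f)\circ \phi_g^{-1}$, not bare conjugation by $\phi_g$; the pullback functor $g^\ast$ is present and need not fix $L$. Concretely, over $\bar k$ the object $E_{\bar k}$ decomposes as $\bigoplus_i E_i$ with the $E_i$ mutually orthogonal $\bar k$-exceptional, and $g^\ast$ can permute the $E_i$ nontrivially; this permutes the primitive idempotents of $L\otimes_k\bar k\cong \bar k^{[L:k]}$ and hence acts nontrivially on $L$. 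The $k$-linearity of $\phi_g$ gives you nothing beyond $k$-linearity of the resulting automorphism of $L$. So for disconnected $G$ the center of $\twist{M_r(L)}{U}$ can be a nontrivial form of $L$, and the map does not land in $H^2(-,R_{L/k}\mathbb{G}_m)$ as stated. Your first argument --- that when $G$ is connected the action on $L$ must be trivial because $\Aut_k(L)$ is a finite constant group scheme --- is correct, and this is all that is needed for the sole application in Proposition~\ref{prop:cat0_no_pts}, where $G=T$ is a torus. You should either add the hypothesis that $G$ is connected or drop the ``more generally'' clause.
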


\begin{proof}
 The map is clearly a natural transformation of functors. Since 
 we assumed that $E$ is \'etale-exceptional, it is also normalized. 
\end{proof}

\begin{proposition} \label{prop:cat0_no_pts}
  Let $X(\Sigma)$ be a split smooth projective toric variety over a 
  field $k$ possessing a full $k$-exceptional collection of TCI-type.
  Suppose there exists a class in $H^1(k,\Aut(\Sigma))$ such that
  the corresponding torus $T$ satisfies $\ICP(k,T) \ne \ast$.
  Then there exists a $k$-form of $X(\Sigma)$ with a full 
  \'etale exceptional collection, but no rational points.
\end{proposition}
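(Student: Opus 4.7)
The plan is to take $X := {}^{\alpha} X(\Sigma)$ for the hypothesized class $\alpha \in H^1(k, \Aut(\Sigma))$; this is a neutral toric $T$-variety over $k$ whose base change to any finite Galois extension $L/k$ splitting $T$ is equivariantly isomorphic to $X(\Sigma)_L$ together with its TCI-type collection. Proposition~\ref{proposition:etale_collection} therefore yields a full \'etale-exceptional collection $E_1, \ldots, E_m$ on $X$ with endomorphism fields $L_i := \op{End}_X(E_i)$. Next, pick a nontrivial class $U \in \ICP(k,T) \subseteq H^1(k,T)$ and set $Y := \twist{X}{U}$. By the factorization of twists recalled in Section~\ref{subsection:arithmetic_toric_varieties}, $Y$ is a $k$-form of $X(\Sigma)$; its dense open $T$-orbit is isomorphic to $U$, which has no $k$-point because its class is nontrivial, so Proposition~\ref{proposition:rational_points} gives $Y(k) = \emptyset$.

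Each $E_i$ is \'etale-exceptional, hence $\op{Ext}^1_X(E_i,E_i)=0$, and $T$ is connected, so Lemma~\ref{lem:G_conn_E_fixed} ensures that every $g \in T(\bar{k})$ fixes $E_{i,\bar{k}}$ up to isomorphism. Proposition~\ref{prop:lin_up_to_sum} then supplies an integer $r_i \geq 1$ and a $T$-linearization $\tilde{E}_i$ of $E_i^{\oplus r_i}$. Applying $\Psi_U$, Proposition~\ref{prop:twist_object_twists_end} identifies $\op{End}_Y(\Psi_U \tilde{E}_i)$ with $\twist{M_{r_i}(L_i)}{U}$, a central simple algebra over $L_i$ whose Brauer class equals the value $\varphi_{E_i}(U) \in \Br(L_i)$ of the invariant constructed in Proposition~\ref{proposition:exptoinv}. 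Because $U$ lies in $\ICP(k,T)$, which kills every normalized Brauer-valued cohomological invariant, this class vanishes, so $\twist{M_{r_i}(L_i)}{U} \cong M_{r_i}(L_i)$ and $\Psi_U \tilde{E}_i \cong F_i^{\oplus r_i}$ for some \'etale-exceptional $F_i \in \op{D^b}(Y)$ with $\op{End}_Y(F_i) = L_i$.

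It remains to check that $\{F_1, \ldots, F_m\}$ is a full exceptional collection on $Y$. Semi-orthogonality follows by reapplying the adjunction-plus-$T$-invariants computation of Proposition~\ref{prop:twist_object_twists_end} to pairs of distinct objects: for $i > j$ and any $n$, $\op{Ext}^n_Y(\Psi_U \tilde{E}_i, \Psi_U \tilde{E}_j)$ identifies with the $T$-invariants of $\op{Ext}^n_X(E_i^{\oplus r_i}, E_j^{\oplus r_j}) \otimes_k k[U]$, which vanishes since $\op{Ext}^n_X(E_i, E_j) = 0$; consequently $\op{Ext}^n_Y(F_i, F_j) = 0$. For fullness, base change to $\bar{k}$: there $Y_{\bar{k}} \cong X(\Sigma)_{\bar{k}}$ and each $F_{i,\bar{k}}$ equals the direct sum over a Galois orbit of TCI-type modules used to build $E_i$, so the collection $\{F_{i,\bar{k}}\}$ contains every module in the original full TCI collection and hence generates $\op{D^b}(Y_{\bar{k}})$; generation then descends to $\op{D^b}(Y)$ by a standard semi-orthogonal complement argument. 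The main obstacle I anticipate is precisely this final bookkeeping, namely verifying that the assignment $E_i \mapsto F_i$ preserves the ordering and that every Hom-space in the twisted setting is controlled by its split counterpart; the TCI assumption is what makes the tracking possible, since the Galois action on the underlying graded modules is explicit enough to follow through $\Psi_U$.
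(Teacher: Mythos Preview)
Your proof is correct and follows essentially the same route as the paper: build the neutral form $X$, invoke Proposition~\ref{proposition:etale_collection} for the \'etale collection, linearize each object up to sums via Lemma~\ref{lem:G_conn_E_fixed} and Proposition~\ref{prop:lin_up_to_sum}, twist by a nontrivial $U \in \ICP(k,T)$, and use Proposition~\ref{prop:twist_object_twists_end} together with the $\ICP$ condition (Proposition~\ref{proposition:exptoinv}) to see the twisted endomorphism algebras stay \'etale. The paper's proof is terser and leaves the semi-orthogonality and fullness verifications implicit, whereas you spell them out; your extra bookkeeping is sound and not an obstacle.
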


\begin{proof}
  Let $X$ be the neutral form of $X(\Sigma)$ corresponding to $T$.
  By Proposition~\ref{proposition:etale_collection},
  $X$ has a full \'etale exceptional collection. Since $T$ is connected, 
  we can apply Lemma~\ref{lem:G_conn_E_fixed} to see that 
  each object in the collection is fixed by all $t \in T(\bar{k})$. 
  Using Proposition~\ref{prop:lin_up_to_sum}, we know that each 
  object in the collection admits a $T$-linearization up to sums. 
  Let $U$ be a non-trivial $T$-torsor in $\ICP(k,T)$.
  Since $U$ is non-trivial, the twist $\twist{X}{U}$ has no rational points
  by Proposition~\ref{proposition:rational_points}.
  By Proposition~\ref{prop:twist_object_twists_end},
  the twist $\twist{X}{U}$ has an exceptional collection.
  Moreover, since $U \in \ICP(k,T)$, this collection is \'etale exceptional by the observation in Proposition~\ref{proposition:exptoinv}. 
\end{proof}

\subsection{Existence of a full TCI-type collection}
\label{subsection:CT_TCI} 

In view of Proposition~\ref{prop:cat0_no_pts},
the only remaining obstacle to the proof of Theorem~\ref{theorem:BB_no}
is to find a torus $T$ such that 
\begin{itemize}
  \item $T$ admits a smooth compactification as a toric variety $X$ 
  with $X_L$ possessing a full exceptional collection of TCI-type and 
  \item $\ICP(k,T)$ is nontrivial. 
\end{itemize}
In this section, we address the first point by verifying that the 
exceptional collection of Castravet and Tevelev is of TCI-type. 

\begin{lemma} \label{lemma:CT_standard}
 Castravet and Tevelev's exceptional collection is of TCI-type.
\end{lemma}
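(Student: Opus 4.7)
The plan is to verify the three bullets of Definition~\ref{def:TCI} for the Castravet--Tevelev collection on $X(A_3)$, using the description recalled in Subsection~\ref{subsection:a3}. The objects divide into (i) line bundles pulled back from the subsystems $A_0, A_1, A_2$ together with the three cuspidal line bundles on $X(A_3)$, and (ii) the six torsion sheaves $i_{\rho,\ast}\mathcal{O}(-1,-1)$ supported on the $\op{Aut}(\Sigma)$-orbit of divisors $D_\rho$ indexed by middle-node weights.

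For an object of type (i), which is a line bundle $\mathcal{O}(\chi)$ for some $\chi\in \op{Pic}(X(A_3))$, we have $j^\ast R(\chi)\cong \mathcal{O}(\chi)$, so this is of TCI-type with $I_s=\emptyset$. For a torsion sheaf as in (ii), we note that $D_\rho = V(x_\rho)$ yields $i_{\rho,\ast}\mathcal{O}_{D_\rho} \cong j^\ast(R/(x_\rho))$. Since $D_\rho\cong \mathbb{P}^1\times\mathbb{P}^1$ is itself a smooth toric subvariety of $X(A_3)$, the restriction map $\op{Pic}(X(A_3))\to\op{Pic}(D_\rho)$ is surjective, so a class $\chi_\rho \in \op{Pic}(X(A_3))$ restricting to $\mathcal{O}(-1,-1)$ on $D_\rho$ exists, and then $j^\ast\bigl(R(\chi_\rho)/(x_\rho)\bigr) \cong i_{\rho,\ast}\mathcal{O}(-1,-1)$. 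This exhibits every object of the collection in the required form, giving the first bullet. The third bullet is the main theorem of \cite{CT}, which asserts that the restricted objects form a full $k$-exceptional collection on $X(A_3)$.

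The remaining task is the second bullet: $\op{Aut}(\Sigma)$-stability of the presentations themselves, not merely of the set of isomorphism classes. For the line-bundle objects this is automatic, since the $\op{Aut}(\Sigma)$-action on $\op{Pic}(X(A_3))$ permutes the chosen classes $\chi_s$ by the construction of \cite{CT}. For the torsion sheaves the delicate point is that one must choose the lifts $\{\chi_\rho\}$ so as to form a single $\op{Aut}(\Sigma)$-orbit; the ambiguity lies in $\op{Ker}(\op{Pic}(X(A_3))\to \op{Pic}(D_\rho))$, and I expect this to be the main obstacle. The plan is to fix one middle ray $\rho_0$, take $\chi_{\rho_0}$ to be the class of $-D_{\rho'}-D_{\rho''}$ for a canonical pair of boundary rays of $D_{\rho_0}$ inside $X(A_3)$, and propagate this choice via the $\op{Aut}(\Sigma)$-action on $\Sigma(1)$. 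Well-definedness reduces to the combinatorial claim that $\op{Stab}_{\op{Aut}(\Sigma)}(\rho_0)$ setwise preserves the pair $\{\rho',\rho''\}$, which can be checked directly from the explicit description of the fan of $X(A_3)$ in $\mathbb{Z}^4/\mathbb{Z}(e_0+e_1+e_2+e_3)$ and the $S_4\times C_2$-action given in Subsection~\ref{subsection:a3}.
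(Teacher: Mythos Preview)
Your overall framework is right: the line-bundle objects need no work, and the crux is to produce, for each middle ray $\rho$, a class $\chi_\rho\in\op{Pic}(X(A_3))$ restricting to $\mathcal O(-1,-1)$ on $D_\rho$ and fixed by $\op{Stab}_{\Aut(\Sigma)}(\rho)$, so that the lifts $R(\chi_\rho)/(x_\rho)$ form a single $\Aut(\Sigma)$-orbit. However, your specific choice of $\chi_{\rho_0}$ does not work. Take $\rho_0=\{0,1\}$; its four adjacent rays are $\{0\},\{1\},\{0,1,2\},\{0,1,3\}$. The stabilizer of $\rho_0$ in $\Aut(\Sigma)\cong S_4\times C_2$ has order $8$, generated by $(01)$, $(23)$, and $(02)(13)\cdot(-1)$, and it acts on these four rays as the dihedral group of a square. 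The two rulings of $D_{\rho_0}\cong\PP^1\times\PP^1$ correspond to the pairs $\{\{0\},\{1\}\}$ and $\{\{0,1,2\},\{0,1,3\}\}$, so the pairs $\{\rho',\rho''\}$ for which $-D_{\rho'}-D_{\rho''}$ restricts to $\mathcal O(-1,-1)$ are exactly the four ``mixed'' pairs; these form a single orbit under the stabilizer and none is fixed. Thus the combinatorial claim you say ``can be checked directly'' is in fact false.

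The paper's lift is different and avoids this obstruction. It uses the projection $\pi:X(A_3)\to X(A_1\times A_1)$ attached to the sub-root system $A_1\times A_1$ lying in $\ker\omega_D$, which satisfies $\pi\circ i=\op{id}$, and identifies $\pi^\ast\mathcal O(-1,-1)\cong G_2^\vee(D+D')$, where $G_2$ is an $\Aut(\Sigma)$-fixed line bundle on $X(A_3)$ and $D'$ is the middle divisor complementary to $D$ under the $C_2$-action. The resulting lift is $R(-\chi_{G_2}+\chi_D+\chi_{D'})/(x_D)$. Since $G_2$ is globally invariant and $\op{Stab}(D)$ visibly fixes the complementary ray $D'$, this class is $\op{Stab}(D)$-invariant, and $\Aut(\Sigma)$ then permutes the six lifts. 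The upshot is that the correct partner for $D$ is its complementary \emph{middle} divisor together with an invariant line bundle, not a pair of adjacent outer-node divisors.
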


\begin{proof}
 Everything except the torsion block is a line bundle, so we just 
 need to check that this block lifts to a module over the Cox ring 
 in an $\op{Aut}(\Sigma)$-stable fashion. 
 
 A weight is in particular a linear function $\omega_D : \mathbb{Z} A_3 \to \mathbb{Z}$. The set of roots lying in the kernel of $\omega_D$ is a root system of type $A_1 \times A_1$. Hence, we have a map $\pi: X(A_3) \to X(A_1 \times A_1)$. The composition $\pi \circ i: X(A_1 \times A_1) \to X(A_1 \times A_1)$ is the identity \cite[Remark 1.12]{BatBlu}.
 
 The line bundle $\pi^\ast \mathcal O(-1,-1)$ therefore restricts via $i^\ast$ to $\mathcal O(-1,-1)$. A computation identifies 
 \begin{displaymath}
   \pi^\ast \mathcal O(-1,-1) \cong G_2^\vee(D+D^\prime)
 \end{displaymath}
 where $G_2$ (using the notation of \cite{CT}) is $(S_4 \times C_2)$-fixed and $D^\prime$ is the image of $D$ under the nontrivial element of $C_2$. 
 
 Let $\chi_{G_2}, \chi, \chi^\prime$ be characters of $\cd{\op{Pic}(X(A_3))}$ corresponding to $G_2, \mathcal O(D), \mathcal O(D^\prime)$. Then, we can lift $i_\ast \mathcal O(-1,-1)$ to 
 \begin{displaymath}
   R(-\chi_{G_2} + \chi_{D} + \chi_{D^\prime)}/(x_D). 
 \end{displaymath}
 The action of $S_4 \times C_2$ permutes these choices of lifts.  
\end{proof}

\begin{remark}
 One can also geometrically identify the coherent sheaves coming from 
 the torsion part, after descending. Depending on the homomorphism 
 $\op{Gal}(L/k) \to \op{Aut}(\Sigma)$, the divisor, over $k$, is 
 either a $R_{L/k} \mathbb{P}^1_L$ or $\mathbb{P}^1_k \times 
 \mathbb{P}^1_k$. In either case, 
 the corresponding exceptional object comes from pushing forward 
 the line bundle on the divisor which base changes to 
 $\mathcal O(-1,-1)$. 
\end{remark}


\section{Proofs of the main results}\label{section:mainproofs}

\subsection{Orlov's Conjecture for toric varieties}

We first prove Theorem~\ref{theorem:FEC_rat} from
the introduction. This follows from a results of Voskresenskii 
on rationality of certain tori. 

\begin{proof}[Proof of Theorem~\ref{theorem:FEC_rat}]
  If $E_1,\ldots,E_n$ is a (general) exceptional collection, then over $\overline{k}$ we have 
  \begin{displaymath}
     (E_i)_{\overline{k}} = \bigoplus (E^j_i)^{\oplus r_i} , 
  \end{displaymath}
  where the $E^j_i$ are distinct $\overline{k}$-exceptional objects 
  permuted by $\Gamma_k$. If $E_i$ are $k$-exceptional, then $r_i = 1$. 
  Thus, the classes $[E^j_i]$ form a $\Gamma$-fixed basis for $\op{K}_0
  (\overline{X})$ so that $\op{K}_0(\overline{X})$ has a trivial 
  $\Gamma$-action.
  
  Since we have a surjective map $\op{det} : \op{K}_0(\overline{X}) \to
  \op{Pic}(\overline{X})$ with $\op{K}_0(\overline{X})$ carrying a trivial
  $\Gamma$-action, the module $\op{Pic}(\overline{X})$ has trivial
  $\Gamma$-action.
  We have a short exact sequence of $\Gamma$-lattices
   \begin{displaymath}
     0 \to \widehat{T} \to \op{Div}(\overline{X}) \to \op{Pic}(\overline{X}) \to 0,
   \end{displaymath}
  where $\op{Div}(\overline{X})$ is permutation and $\op{Pic}(\overline{X})$ is
  trivial.
  Taking Cartier duals we have an exact sequence of tori
  \begin{displaymath}
     1 \to \mathbb{G}_{m}^{r} \to R_{E/k} \mathbb{G}_m \to T \to 1,
   \end{displaymath}
  for some \'etale algebra $E$.
  From \cite[Theorem 2]{VoskyRatl},
  we conclude that $T$ is rational, and thus so is $X$.
\end{proof} 

\subsection{Non-rational toric varieties with full \'etale exceptional collections}

In this section, we prove Theorem~\ref{theorem:BB_no}. We do this by 
constructing an example that satisfies the requirements of 
Proposition~\ref{proposition:exptoinv}. While our arguments can be 
generalized (with appropriate assumptions) to global fields, we 
stick to working over $\mathbb{Q}$. 

\begin{lemma} \label{lemma:5_29_Q}
  The following facts hold for $\mathbb{Q}(\sqrt{5},\sqrt{29})/\mathbb{Q}$:
  \begin{itemize}
    \item $\op{Gal}(\mathbb{Q}(\sqrt{5},\sqrt{29})/\mathbb{Q}) 
    \cong C_2 \times C_2$ 
    \item $\ICP(\mathbb{Q}, 
    R^{(1)}_{\mathbb{Q}(\sqrt{5},\sqrt{29})/\mathbb{Q}} \mathbb{G}_m)
    = C_2.$ 
  \end{itemize}
\end{lemma}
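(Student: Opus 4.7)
The first bullet is elementary Galois theory: $5$, $29$, and $145 = 5 \cdot 29$ are distinct non-squares in $\mathbb{Q}$, so the biquadratic extension $\mathbb{Q}(\sqrt{5}, \sqrt{29})/\mathbb{Q}$ has Galois group $G \cong C_2 \times C_2$. For the second, set $K := \mathbb{Q}(\sqrt{5}, \sqrt{29})$ and $T := R^{(1)}_{K/\mathbb{Q}} \mathbb{G}_m$. The plan is to apply Proposition~\ref{proposition:icp_tori} to identify $\ICP(\mathbb{Q}, T)$ with $\Sha^1(\mathbb{Q}, T)$, then invoke Tate's Poitou--Tate duality for tori over number fields to identify the latter with the Pontryagin dual of $\Sha^2(\mathbb{Q}, \hat{T})$. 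Here $\hat{T}$ is the character lattice, namely the module $J$ appearing in the sequence $0 \to \mathbb{Z} \to \mathbb{Z}[G] \to J \to 0$ recalled just before Lemma~\ref{lemma:aug_ideal_presentation}. For a torus split by a Galois extension, a standard reduction identifies this global Sha with the group-cohomology kernel $\ker(H^2(G, J) \to \prod_v H^2(G_v, J))$, where $G_v \leq G$ is the decomposition group at $v$.

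Next I will carry out the group-cohomology calculation. Since $\mathbb{Z}[G]$ is cohomologically acyclic for every subgroup $H \leq G$ (Shapiro from the trivial subgroup), the above sequence yields $H^i(H, J) \cong H^{i+1}(H, \mathbb{Z})$ for $i \geq 1$. By K\"unneth, $H^2(G, J) \cong H^3(C_2 \times C_2, \mathbb{Z}) \cong \mathbb{Z}/2$; meanwhile, for each $H \cong C_2$, the $2$-periodicity of $H^*(C_2, \mathbb{Z})$ forces $H^2(H, J) \cong H^3(C_2, \mathbb{Z}) = 0$. Therefore, if every decomposition group $G_v$ is a proper (hence cyclic of order $\leq 2$) subgroup of $G$, then every restriction of $H^2(G, J)$ to $H^2(G_v, J)$ vanishes and the Sha group equals $\mathbb{Z}/2$, yielding the desired $\ICP(\mathbb{Q}, T) \cong C_2$ after dualization.

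The main obstacle is therefore the arithmetic verification that no place of $\mathbb{Q}$ has decomposition group equal to the full $G$, since this is what pins down the specific choice of the primes $5$ and $29$. At unramified primes this is automatic because $G$ is non-cyclic. The only ramified primes of $K/\mathbb{Q}$ are $5$ and $29$. At $p = 5$, the congruence $29 \equiv 2^2 \pmod{5}$ places $\sqrt{29}$ in $\mathbb{Q}_5$ by Hensel's lemma, so the decomposition group is the stabilizer of $\sqrt{29}$, namely $C_2$. At $p = 29$, quadratic reciprocity gives $\left(\tfrac{5}{29}\right) = \left(\tfrac{29}{5}\right) = \left(\tfrac{4}{5}\right) = 1$, whence $\sqrt{5} \in \mathbb{Q}_{29}$ and symmetrically the decomposition group is $C_2$. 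The archimedean place is real-split since $K \subset \mathbb{R}$. This is the step where the specific choice of $5$ and $29$ is essential: different biquadratic extensions need not pass this test, and when they fail, the full $G$ appears as a decomposition group and the resulting Sha group vanishes.
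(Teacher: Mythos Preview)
Your proof is correct and follows the same top-level strategy as the paper: both reduce $\ICP(\mathbb{Q},T)$ to $\Sha^1(\mathbb{Q},T)$ via Proposition~\ref{proposition:icp_tori}. The difference is that the paper then simply cites \cite[Example~11.6.3.2]{Vosky} for the value of $\Sha^1$, whereas you carry out that computation in full---Poitou--Tate duality, the dimension shift $H^i(H,J)\cong H^{i+1}(H,\mathbb{Z})$, the K\"unneth calculation $H^3(C_2\times C_2,\mathbb{Z})\cong \mathbb{Z}/2$, and the arithmetic check that every decomposition group in $\mathbb{Q}(\sqrt{5},\sqrt{29})/\mathbb{Q}$ is cyclic. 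Your expanded argument is precisely what lies behind the cited example, and it has the merit of making transparent why the particular primes $5$ and $29$ (both $\equiv 1 \pmod 4$, each a square modulo the other) are chosen.
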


\begin{proof}
  The first fact is straightforward. For the second, use 
  Proposition~\ref{proposition:icp_tori} and the fact that 
  \begin{displaymath}
    \Sha^1(\mathbb{Q}, 
    R^{(1)}_{\mathbb{Q}(\sqrt{5},\sqrt{29})/\mathbb{Q}} \mathbb{G}_m)
    = C_2,
  \end{displaymath}
  as calculated in \cite[Example 11.6.3.2]{Vosky}. 
\end{proof}

For convenience, we set 
$G := \op{Gal}(\mathbb{Q}(\sqrt{5},\sqrt{29})/\mathbb{Q})$. 
We let $\Sigma$ denote the fan of $X(A_3)$ in $N_\mathbb{R}$. 

\begin{lemma} \label{lemma:gal_action_on_fan}
  There is a homomorphism 
\begin{displaymath}
  \phi : G \to \op{Aut}(\Sigma)
\end{displaymath} 
  such that the Cartier dual to the 
  $G$-module $M = N^{\vee}$ is the norm-one torus 
  $R^{(1)}_{\mathbb{Q}(\sqrt{5},\sqrt{29})/\mathbb{Q}} \mathbb{G}_m$. 
\end{lemma}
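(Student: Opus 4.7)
\begin{sproof}
The plan is to write down $\phi$ explicitly and identify the induced $G$-action on the cocharacter lattice $N = \mathbb{Z}^4/\mathbb{Z}(e_0+e_1+e_2+e_3)$ with the augmentation ideal $I$ of Lemma~\ref{lemma:aug_ideal_presentation}; duality then delivers the claim about $M$.

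Fix generators $\sigma, \tau$ of $G = C_2 \times C_2$. Recall $\op{Aut}(\Sigma) = S_4 \times C_2$, where $S_4$ permutes the indices of the $e_i$ and the outer $C_2 = \langle c \rangle$ sends each $e_i \mapsto -e_i$. I propose
\[ \phi(\sigma) := c \cdot (01), \qquad \phi(\tau) := c \cdot (23). \]
Because $c$ is central in $\op{Aut}(\Sigma)$ and the transpositions $(01), (23)$ commute in $S_4$, the images are commuting involutions, so $\phi$ extends to a homomorphism with $\phi(\sigma\tau) = (01)(23)$.

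Next, take $(e_2, e_0, e_1)$ as an ordered $\mathbb{Z}$-basis of $N$ (using $e_3 = -(e_0+e_1+e_2)$) and compute the matrices of $\phi(\sigma), \phi(\tau)$. One checks directly that these matrices agree with the matrices of $\sigma, \tau$ on $I$ in Lemma~\ref{lemma:aug_ideal_presentation}; concretely, the correspondence $e_2 \leftrightarrow 1-\sigma$, $e_0 \leftrightarrow \tau-1$, $e_1 \leftrightarrow \sigma-\sigma\tau$ is a $G$-equivariant isomorphism $N \cong I$.

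Finally, dualize. The functor $\op{Hom}_\mathbb{Z}(-,\mathbb{Z})$ is exact on free $\mathbb{Z}$-modules, and dualizing $0 \to I \to \mathbb{Z}[G] \to \mathbb{Z} \to 0$ identifies $I^\vee$ with $J = \mathbb{Z}[G]/\mathbb{Z}(\sum g)$. Hence $M = N^\vee \cong J$, and by Cartier duality $\cd M \cong R^{(1)}_{\mathbb{Q}(\sqrt 5, \sqrt{29})/\mathbb{Q}} \mathbb{G}_m$. The main obstacle is locating a suitable $\phi$: since the rational character of the $G$-lattice structure on $N$ does not distinguish $I$ from various other lattices of the same $\mathbb{Q}$-type, the integral matching of matrices (not just traces) must be verified. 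Once the above formula is guessed, the remainder is mechanical.
\end{sproof}
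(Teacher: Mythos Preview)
Your proof is correct and follows essentially the same approach as the paper: write down an explicit $\phi$ sending the generators of $G$ to products of the outer involution with a pair of disjoint transpositions, then compute the resulting matrices on $N$ in a suitable basis and match them against Lemma~\ref{lemma:aug_ideal_presentation}. The only cosmetic difference is your choice of transpositions $(01),(23)$ and basis $(e_2,e_0,e_1)$ versus the paper's $(12),(03)$ and $(e_0,e_1,e_2)$, which are related by an obvious relabeling.
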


\begin{proof}
 Let $\sigma$ be the composition of the outer involution $(-1)$ 
 and the permutation $(12)$ in 
 $\op{Aut}(\Sigma)$ and let $\tau$ be the composition of $(-1)$ 
 and $(03)$ in $\op{Aut}(\Sigma)$. In the basis $e_0,e_1,e_2$ of 
 $N$, $\sigma$ and $\tau$ are represented by the matrices
 \begin{displaymath}
  \begin{pmatrix}
    -1 & 0 & 0 \\
    0 & 0 & -1 \\
    0 & -1 & 0 \\
  \end{pmatrix}
  \begin{pmatrix}
    1 & 0 & 0 \\
    1 & -1 & 0 \\
    1 & 0 & -1 \\
  \end{pmatrix}
\end{displaymath}
respectively. From Lemma~\ref{lemma:aug_ideal_presentation},
we see that $N$ with this $(C_2 \times C_2)$-action is isomorphic to 
the kernel of the augmentation ideal $J$ of 
$\mathbb{Z}[C_2 \times C_2]$. Consequently, the Cartier dual of 
the dual $\Z$-module $M = N^\vee$ is the character group of a norm-
one torus for a biquadratic extension. 
\end{proof} 

The neutral toric variety for the torus 
$R^{(1)}_{\mathbb{Q}(\sqrt{5},\sqrt{29})/\mathbb{Q}} \mathbb{G}_m$, 
coming from $\phi$, will be denoted $X$.

Theorem~~\ref{theorem:Kuznetsov_no} now follows from the following:

\begin{theorem} \label{theorem:non_retract_rat_full_etale}
  The variety $X$ is smooth, geometrically rational, and not
  retract-rational. It possesses a full \'etale exceptional collection whose objects are coherent sheaves.
\end{theorem}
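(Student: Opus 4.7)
The plan is to verify each of the four stated properties of $X$ separately. Smoothness and geometric rationality are immediate from the construction: base change to $\overline{\mathbb{Q}}$ identifies $X$ with the split toric variety $X(A_3)_{\overline{\mathbb{Q}}}$, which is smooth and contains the open dense torus $\mathbb{G}_m^3$, hence is rational.

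For the failure of retract rationality, I plan to reduce to the analogous statement for the torus $T = R^{(1)}_{\mathbb{Q}(\sqrt{5},\sqrt{29})/\mathbb{Q}} \mathbb{G}_m$. Since $X$ is neutral, its open $T$-orbit is $T$-equivariantly isomorphic to $T$, so $X$ is $\mathbb{Q}$-birational to $T$. Because retract rationality is a birational invariant of varieties, it suffices to show $T$ is not retract $\mathbb{Q}$-rational. From Lemma~\ref{lemma:5_29_Q} we have $\Sha^1(\mathbb{Q}, T) = C_2 \neq 0$, while any retract rational torus over a number field has vanishing Tate-Shafarevich group; this is classical (Colliot-Th\'el\`ene--Sansuc), and can also be recovered from Proposition~\ref{proposition:icp_tori} together with the birational invariance of $\ICP$ for tori \cite[Proposition 4.2]{BDLM}. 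This is the main subtle step, and I plan to spell it out carefully. Hence $T$, and therefore $X$, is not retract rational.

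For the existence of a full \'etale exceptional collection of coherent sheaves, I plan to combine the machinery of Section~\ref{section:TCI} with the verification in Lemma~\ref{lemma:CT_standard}. Set $k = \mathbb{Q}$ and $L = \mathbb{Q}(\sqrt{5},\sqrt{29})$. By the construction of $X$ from the homomorphism $\phi$ of Lemma~\ref{lemma:gal_action_on_fan}, we have $X_L \cong X(A_3)_L$, and $L/k$ is Galois by Lemma~\ref{lemma:5_29_Q}. Lemma~\ref{lemma:CT_standard} asserts that the Castravet-Tevelev exceptional collection on $X(A_3)$ is of TCI-type. Proposition~\ref{proposition:etale_collection} then yields, via Galois descent of graded Cox-ring modules, a full \'etale exceptional collection on $X$. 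Since the Castravet-Tevelev collection consists of coherent sheaves (line bundles together with pushforwards of line bundles from toric divisors), and the descent procedure in the proof of Proposition~\ref{proposition:etale_collection} preserves the coherent-sheaf property, the descended exceptional objects are themselves coherent sheaves.
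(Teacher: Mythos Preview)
Your proof is correct and follows essentially the same structure as the paper's: smoothness and geometric rationality are immediate, and the full \'etale exceptional collection of coherent sheaves comes from Proposition~\ref{proposition:etale_collection} together with Lemma~\ref{lemma:CT_standard}, exactly as in the paper.

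The one point of divergence is the non-retract-rationality argument. The paper simply invokes the fact that norm-one tori for biquadratic extensions are \emph{never} retract rational \cite[Example~11.6.3.1]{Vosky}; this holds for any biquadratic extension and is what justifies the Remark immediately following the theorem. You instead deduce non-retract-rationality from the specific computation $\Sha^1(\mathbb{Q},T)=C_2$ in Lemma~\ref{lemma:5_29_Q} combined with the vanishing of $\Sha^1$ for retract-rational tori. Both arguments are valid; the paper's is more uniform across biquadratic extensions, while yours reuses an already-established lemma. One small caution: mere \emph{birational} invariance of $\ICP$ does not by itself yield triviality for retract-rational tori, so if you take that route you should either cite the stronger stable-birational invariance from \cite{BDLM} or, as you also indicate, appeal directly to the classical Colliot-Th\'el\`ene--Sansuc result.
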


\begin{proof}
  Smoothness and geometric irreducibility are clear. No norm-one tori for 
  biquadratic extensions are ever retract rational 
  \cite[Example 11.6.3.1]{Vosky}. The fact we have a
  full \'etale exceptional collection follows from Proposition
  ~\ref{proposition:etale_collection} and Lemma
  ~\ref{lemma:CT_standard}. 
\end{proof}

\begin{remark}
  Theorem~\ref{theorem:non_retract_rat_full_etale} holds if we 
  start with any biquadratic extension $K/k$ in place of 
  $\mathbb{Q}(\sqrt{5},\sqrt{29})/\mathbb{Q}$. 
\end{remark}

We are finally in a position to prove Theorem~\ref{theorem:BB_no}.

\begin{proof}[Proof of Theorem~\ref{theorem:BB_no}]
We simply need to verify the conditions of
Proposition~\ref{prop:cat0_no_pts} for $X$ and $T$. 
The existence of a full $k$-exceptional collection of TCI-type
follows from Lemma~\ref{lemma:CT_standard}.
The non-triviality of $\ICP(k,T)$ follows from
Lemma~\ref{lemma:5_29_Q}.
\end{proof}

\begin{remark}
 The arguments of Theorem~\ref{theorem:BB_no} go through if we 
 replace $\mathbb{Q}(\sqrt{5},\sqrt{29})/\mathbb{Q}$ by a 
 biquadratic extension $K/k$ with 
 $\ICP(k,R^{(1)}_{K/k} \mathbb{G}_m)$ nontrivial. We leave it 
 to the interested reader to show that one can always find such a field
 $K$ for any global field $k$. 
\end{remark}

The following variant of Orlov's Conjecture is suggested by the results
of this paper:

\begin{conjecture}
  If $X$ is a variety over field $k$ possessing a full \'etale-exceptional 
  collection and a $k$-point then $X$ is $k$-unirational. 
\end{conjecture}

Over a closed field, this is a weakening of Orlov's Conjecture.
The conjecture holds for toric varieties since all toric varieties
with a rational point are unirational.


\appendix

\section{Proof of Proposition~\ref{proposition:isom_group}}\label{appendix:proof_group_structure}

\begin{proof}[Proof of Proposition~\ref{proposition:isom_group}]
  Using \cite[Theorem 3.45]{vistoli_stack}, we can assume that 
  $\mathcal C$ is split. 
We first describe the natural transformations. Let 
  $q : T \times X \to X$ be the projection, and assume we have 
  an isomorphism $\alpha : q^\ast E \to q^\ast E$. Set 
  $g : T \to \operatorname{Spec} k \overset{e}{\to} G$. 
  Then $(g,\alpha)$ is a $T$-point of 
  $\widetilde{G}_E$ above $g$.
  The other natural transformation $\widetilde{G}_E \to G$ is given by $(g,\alpha)  \mapsto g. $
  
Let us define the group structure. Given $(g_1,\alpha_1), 
  (g_2, \alpha_2)$ in 
  $\widetilde{G}_E(T)$, we first 
  set $g: T \to G$ to be the composition
  \begin{displaymath}
    T \overset{\Delta}{\longrightarrow} T \times T \overset{g_1 \times g_2}{\longrightarrow}
     G \times G \overset{m}{\longrightarrow} G 
  \end{displaymath}
  where $m : G \times G \to G$ is the group operation on $G$.
  The corresponding Cartesian diagram can be factored as a sequence 
  of Cartesian diagrams as below. 
  \begin{center}
    \begin{tikzpicture}
      \node (a) at (-5,1) {$T \times X$};
      \node (b) at (-2,1) {$T \times T \times X$};
      \node (c) at (2,1) {$G \times G \times X$};
      \node (d) at (5,1) {$G \times X$};
      \node (e) at (-5,-1) {$T$};
      \node (f) at (-2,-1) {$T \times T$};
      \node (g) at (2,-1) {$G \times G$};
      \node (h) at (5,-1) {$G$};
      \draw[->] (a) -- node[above] {$\Delta \times 1$} (b);
      \draw[->] (a) -- (e);
      \draw[->] (e) -- node[above] {$\Delta$} (f);
      \draw[->] (b) -- node[above] {$g_1 \times g_2 \times 1$} (c);
      \draw[->] (b) -- (f);
      \draw[->] (f) -- node[above] {$g_1 \times g_2$} (g);
      \draw[->] (c) -- node[above] {$m \times 1$} (d);
      \draw[->] (c) -- node[right] {$p_{12}$} (g);
      \draw[->] (g) -- node[above] {$m$} (h);
      \draw[->] (d) -- node[right] {$p$} (h);
    \end{tikzpicture}
  \end{center}
  
  We have a commutative diagram 
  \begin{center}
    \begin{tikzpicture}
      \node (a) at (-4,1) {$T \times T \times X$};
      \node (b) at (0,1) {$G \times G \times X$};
      \node (c) at (4,1) {$G \times X$};
      \node (d) at (-2,-1) {$T \times G \times X$};
      \node (e) at (2,-1) {$T \times X$};
      \draw[->] (a) -- node[above] {$g_1 \times g_2 \times 1$} (b);
      \draw[->] (a) -- node[left=2em,below] {$1 \times g_2 \times 1$} (d);
      \draw[->] (b) -- node[above] {$1 \times \sigma$} (c);
      \draw[->] (d) -- node[below] {$1 \times \sigma$} (e);
      \draw[->] (e) -- node[right=1em,below] {$g_1 \times 1$} (c);
    \end{tikzpicture}
  \end{center}
  Thus, we have equalities
  \begin{gather*}
    (g_1 \times g_2 \times 1)^\ast (1\times \sigma)^\ast \sigma^\ast E 
     = (1 \times g_2 \times 1)^\ast (1 \times \sigma)^\ast
    (g_1 \times 1)^\ast \sigma^\ast E \\
    (g_1 \times g_2 \times 1)^\ast (1\times \sigma)^\ast \pi^\ast E 
    = (1 \times g_2 \times 1)^\ast (1 \times \sigma)^\ast 
    (g_1 \times 1)^\ast \pi^\ast E. 
  \end{gather*}
  Using $\alpha_1$, we have an isomorphism 
  \begin{displaymath}
    \tilde{\alpha}_1 := 
    (1 \times g_2 \times 1)^\ast (1 \times \sigma)^\ast \alpha_1 
    : (g_1 \times g_2 \times 1)^\ast (1\times \sigma)^\ast \sigma^\ast E 
    \to (g_1 \times g_2 \times 1)^\ast (1\times \sigma)^\ast \pi^\ast E
  \end{displaymath}

  Since $\pi \circ (1 \times \sigma) = \sigma \circ p_{23}$, we have  
  \begin{displaymath}
    (1\times \sigma)^\ast \pi^\ast E = p_{23}^\ast \sigma^\ast E. 
  \end{displaymath}
  We also have a commutative diagram 
  \begin{center}
    \begin{tikzpicture}
      \node (a) at (-4,1) {$T \times T \times X$};
      \node (b) at (0,1) {$G \times G \times X$};
      \node (c) at (4,1) {$G \times X$};
      \node (d) at (-2,-1) {$G \times T \times X$};
      \node (e) at (2,-1) {$T \times X$};
      \draw[->] (a) -- node[above] {$g_1 \times g_2 \times 1$} (b);
      \draw[->] (a) -- node[left=2em,below] {$g_1 \times 1 \times 1$} (d);
      \draw[->] (b) -- node[above] {$p_{23}$} (c);
      \draw[->] (d) -- node[below] {$p_{23}$} (e);
      \draw[->] (e) -- node[right=1em,below] {$g_2 \times 1$} (c);
    \end{tikzpicture}
  \end{center}
  Thus, we have  
  \begin{gather*}
    (g_1 \times g_2 \times 1)^\ast p_{23}^\ast \sigma^\ast E 
    = (g_1 \times 1 \times 1)^\ast p_{23}^\ast 
    (g_2 \times 1)^\ast \sigma^\ast E \\
    (g_1 \times g_2 \times 1)^\ast p_{23}^\ast \pi^\ast E 
    = (g_1 \times 1 \times 1)^\ast p_{23}^\ast 
    (g_2 \times 1)^\ast \pi^\ast E. 
  \end{gather*}
  Using $\alpha_2$, we have an isomorphism 
  \begin{displaymath}
    \tilde{\alpha}_2 := 
    (g_1 \times 1 \times 1)^\ast p_{23}^\ast \alpha_2 
    : (g_1 \times g_2 \times 1)^\ast p_{23}^\ast \sigma^\ast E 
    \to (g_1 \times g_2 \times 1)^\ast p_{23}^\ast \pi^\ast E
  \end{displaymath}

  Since $\sigma \circ (m \times 1) =  \sigma \circ (1 \times \sigma)$, 
  we have 
  \begin{displaymath}
    (m \times 1)^\ast \sigma^\ast E = (1 \times \sigma)^\ast 
    \sigma^\ast E.
  \end{displaymath}
  Similarly, we have $\pi \circ (m \times 1) = \pi \circ p_{23}$ so 
  \begin{displaymath}
    (m \times 1)^\ast \pi^\ast E = p_{23}^\ast \pi^\ast E. 
  \end{displaymath}

  Thus, we have an isomorphism 
  \begin{displaymath}
    \alpha := (\Delta \times 1)^\ast (\tilde{\alpha}_2 \circ 
    \tilde{\alpha}_1) : (\sigma^\ast E)_T \to (\pi^\ast E)_T
  \end{displaymath}
  over the map $g : T \to G$. This can be simplified by noting that 
  \begin{displaymath}
    p_{23} \circ (g_1 \times 1 \times 1) \circ (\Delta \times 1) = 1
  \end{displaymath}
  so 
  \begin{align*}
    \alpha & = \alpha_2 \circ (\Delta \times 1)^\ast \tilde{\alpha}_1 \\
    & = \alpha_2 \circ (\Delta \times 1)^\ast  
    (1 \times g_2 \times 1)^\ast (1 \times \sigma)^\ast \alpha_1. 
  \end{align*}

  Given a test scheme $T$, the identity in 
  $\widetilde{G}_E(T)$ has the  
  composition $T \to \operatorname{Spec} k \overset{e}{\to} G$ 
  as the first component. Since $\sigma \circ (e \times 1) = 
  \pi \circ (e \times 1)$, we have 
  \begin{displaymath}
    (e \times 1)^\ast \sigma^\ast E = (e \times 1)^\ast \pi^\ast E.
  \end{displaymath}
  Pulling this back via $T \to \op{Spec} k$ gives the isomorphism 
  \begin{displaymath}
    (\sigma^\ast E)_T = (\pi^\ast E)_T. 
  \end{displaymath}

  Let $\iota : G \to G$ denote the inversion in $G$. Given 
  $g : T \to G$ and $\alpha : \sigma^\ast E \to \pi^\ast E$, 
  the map $\iota \circ g : T \to G$ is the first component 
  of the inverse of $(g,\alpha)$. Denote the following composition 
  \begin{displaymath}
    T \times X \overset{\Delta \times 1}{\to} T \times T \times X 
    \overset{1 \times g \times 1}{\to} T \times G \times X 
    \overset{1 \times \iota \times 1 }{\to } T \times G \times X 
    \overset{1 \times \sigma}{\to} T \times X
  \end{displaymath}
  by $\Phi : T \times X \to T \times X$. Then the second component 
  of the inverse is $\Phi^\ast \alpha^{-1}$. We check this indeed 
  gives a map $(\sigma^\ast E)_T \to (\pi^\ast E)_T$ for 
  $\iota \circ g: T \to G$. One can check that 
  \begin{displaymath}
    \sigma \circ (g \times 1) \circ \Phi = \pi \circ (\iota \times 1) 
    \circ (g \times 1)
  \end{displaymath} 
  and 
  \begin{displaymath}
    \pi \circ (g \times 1) \circ \Phi = \sigma \circ (\iota \times 1) 
    \circ (g \times 1)
  \end{displaymath}
  so indeed 
  \begin{displaymath}
    \Phi^\ast \alpha^{-1} : (\sigma^\ast E)_T  \to (\pi^\ast E)_T 
  \end{displaymath}
  for $\iota \circ g : T \to G$. 

  Now we check the axioms of a group. For associativity with 
  $(g_1,\alpha_1), (g_2,\alpha_2), (g_3,\alpha_3)$, we are 
  comparing 
  \begin{displaymath}
    \alpha_3 \circ (\Delta \times 1)^\ast (1 \times g_3 \times 1)^\ast 
    (1 \times \sigma)^\ast \alpha_2 \circ (\Delta \times 1)^\ast 
    (1 \times g_2g_3 \times 1)^\ast (1 \times \sigma)^\ast \alpha_1
  \end{displaymath}
  and 
  \begin{displaymath}
    \alpha_3 \circ (\Delta \times 1)^\ast (1 \times g_3 \times 1)^\ast 
    (1 \times \sigma)^\ast \left( \alpha_2 \circ 
    (\Delta \times 1)^\ast (1 \times g_2 \times 1)^\ast 
     (1 \times \sigma)^\ast \alpha_1 \right).
  \end{displaymath}
  So it suffices to know that  
  \begin{displaymath}
    (1 \times \sigma) \circ (1 \times g_2g_3 \times 1) \circ 
    (\Delta \times 1) = (1 \times \sigma) \circ (1 \times g_2 \times 1) 
    \circ (\Delta \times 1) \circ  (1 \times \sigma) \circ 
    (1 \times g_2 \times 1) \circ (\Delta \times 1)
  \end{displaymath}
  which is easy to see and follows from $\sigma$ being an action. 

  Next, we verify the inverses are indeed inverses. On one side we have 
  \begin{displaymath}
    \alpha \circ (\Delta \times 1)^\ast (1 \times g \times 1)^\ast 
    (1 \times \sigma)^\ast \Phi^\ast \alpha^{-1}.
  \end{displaymath}
  Note that 
  \begin{displaymath}
    \Phi = (1 \times \sigma) \circ (1 \times g^{-1} \times 1) \circ 
    (\Delta \times 1).
  \end{displaymath}
  Thus, 
  \begin{gather*}
    (\Delta \times 1)^\ast (1 \times g \times 1)^\ast 
    (1 \times \sigma)^\ast \Phi^\ast \alpha^{-1} = (\Delta \times 1)^\ast 
    (1 \times g g^{-1} \times 1)^\ast (1 \times \sigma)^\ast \alpha^{-1}
    \\
    = (\Delta \times 1)^\ast (1 \times e \times 1)^\ast 
    (1 \times \sigma)^\ast \alpha^{-1}.
  \end{gather*}
  Since 
  \begin{displaymath}
    (1 \times \sigma) \circ (1 \times e \times 1) \circ (\Delta \times 1) 
    = 1, 
  \end{displaymath}
  we have 
  \begin{displaymath}
    (\Delta \times 1)^\ast (1 \times e \times 1)^\ast 
    (1 \times \sigma)^\ast \alpha^{-1} = \alpha^{-1}
  \end{displaymath}
  and 
  \begin{displaymath}
    \alpha \circ (\Delta \times 1)^\ast (1 \times g \times 1)^\ast 
    (1 \times \sigma)^\ast \Phi^\ast \alpha^{-1}  = 1. 
  \end{displaymath}
  In the other direction, we have to simplify
  \begin{displaymath}
    \Phi^\ast \alpha^{-1} \circ (\Delta \times 1)^\ast 
    (1 \times g^{-1} \times 1)^\ast (1 \times \sigma)^\ast \alpha.
  \end{displaymath}
  Since 
  \begin{displaymath}
    \Phi = (1 \times \sigma) \circ (1 \times g^{-1} \times 1) \circ 
    (\Delta \times 1), 
  \end{displaymath}
  we have 
  \begin{displaymath}
    \Phi^\ast \alpha^{-1} \circ (\Delta \times 1)^\ast 
    (1 \times g^{-1} \times 1)^\ast (1 \times \sigma)^\ast \alpha = 
    \Phi^\ast \alpha^{-1} \circ \Phi^\ast \alpha = 
    \Phi^\ast ( \alpha^{-1} \circ \alpha)  = 1. 
  \end{displaymath}

  Finally, we need to check the identity is indeed the identity. On 
  one side, we simplify
  \begin{displaymath}
    1 \circ (\Delta \times 1)^\ast (1 \times e \times 1)^\ast 
    (1 \times \sigma)^\ast \alpha = 1 \circ \alpha = \alpha
  \end{displaymath}
  using 
  \begin{displaymath}
    (1 \times \sigma) \circ (1 \times e \times 1) \circ (\Delta \times 1) 
    = 1, 
  \end{displaymath}
  On the other side, we have 
  \begin{displaymath}
    \alpha \circ (\Delta \times 1)^\ast (1 \times g \times 1)^\ast 
    (\Delta \times 1)^\ast 1 = \alpha \circ 1 = \alpha 
  \end{displaymath}
  since pullback preserves identity morphisms. 

  It is straightforward to see that the operations defined reduce to what is described in Remark~\ref{remark:geo_pts_GE}.
\end{proof}


\section{Representability Results for \texorpdfstring{$\widetilde G_E$}{GE}}\label{appendix:representability}

For an object $E$, the sheaf of groups $\widetilde{G}_E$ given in Definition~\ref{defn:Gtilde} is representable in many cases of interest. Given an algebraic space $B$, we let $\mathbf{Sch}_B$ denote the category of schemes over $B$.

\begin{proposition} \label{prop:rep_GE_coh}
  Let $X$ be a smooth and projective $k$-scheme, and let $E$ be a coherent 
  sheaf on $X$. Then $\widetilde{G}_E$ is an affine group scheme. 
\end{proposition}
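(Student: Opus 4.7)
The plan is to realize $\widetilde{G}_E$ as a scheme affine over $G$ and then invoke that $G$ itself is affine as a linear algebraic group. Since Proposition~\ref{proposition:isom_group} already endows $\widetilde{G}_E$ with a group structure once it is representable, the substance of the claim is representability together with affineness over $k$.

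Set $p : G \times X \to G$, which is proper, flat, and of finite presentation because $X$ is smooth and projective over $k$. The first step is to verify that $\sigma^{\ast} E$ and $\pi^{\ast} E$ are coherent on $G \times X$ and flat over $G$ via $p$. For $\pi^{\ast} E$ this is immediate, since $\pi$ is the second projection and $\pi^{\ast} E$ is locally of the form $\mathcal{O}_G \otimes_k E|_U$. For $\sigma^{\ast} E$, I would use the shear isomorphism $(p, \sigma) : G \times X \to G \times X$, which commutes with $p$ and carries $\pi^{\ast} E$ to $\sigma^{\ast} E$; flatness over $G$ is therefore inherited.

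With these hypotheses in place, I would invoke the representability of internal Hom functors for coherent families on a proper flat base, as in \cite[Tag 08JY]{stacks-project}: for both of the ordered pairs $(\sigma^{\ast} E, \pi^{\ast} E)$ and $(\pi^{\ast} E, \sigma^{\ast} E)$, the Hom functor $\operatorname{Hom}_p(-, -)$ is representable by a linear scheme, i.e., an affine $G$-scheme of finite presentation. Call these $H_1$ and $H_2$.

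Finally, $\widetilde{G}_E = \operatorname{Isom}_p(\sigma^{\ast} E, \pi^{\ast} E)$ is cut out of $H_1 \times_G H_2$ by the two closed conditions that the compositions in either order equal the respective identity endomorphisms; this realizes it as a closed subscheme of an affine $G$-scheme, hence affine over $G$. Combined with the affineness of $G$, we conclude that $\widetilde{G}_E$ is affine over $k$, and the group structure from Proposition~\ref{proposition:isom_group} upgrades it to an affine group scheme. The main subtlety is ensuring flatness of $\sigma^{\ast} E$ over $G$, which is resolved cleanly by the shear-map observation; the remainder is a direct application of the cited representability theorem together with the formal description of $\operatorname{Isom}$ as a closed subscheme of a product of Hom schemes.
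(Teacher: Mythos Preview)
Your proposal is correct and takes essentially the same approach as the paper: apply \cite[Tag~08JY]{stacks-project} with $f=p$, $\mathcal{F}=\sigma^\ast E$, $\mathcal{G}=\pi^\ast E$ to obtain an algebraic space affine over $G$, then use that $G$ is affine. The only differences are cosmetic: the paper invokes 08JY directly for the Isom functor rather than building it from two Hom schemes, and it omits your verification (via the shear isomorphism) that $\sigma^\ast E$ is flat over $G$---a hypothesis that is genuinely needed and that you check correctly.
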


\begin{proof}
  We take $\mathcal F = \sigma^\ast E$, $\mathcal G = \pi^\ast E$, and $f = p : G \times X \to G$ in
  \cite[\href{https://stacks.math.columbia.edu/tag/08JY}{Lemma 08JY}]
  {stacks-project}. From this, we know that $\widetilde{G}_E$ is an algebraic space which is affine over $G$.   
  Since $G$ is affine, 
  $\widetilde{G}_E$ is affine.
\end{proof}

\begin{proposition} \label{prop:rep_h0}
  Let $S$ be a scheme and $B$ an algebraic space over $S$. 
  Let $K$ be a pseudo-coherent object of $\rm{D}$$(B)$. If for all 
  $g : T \to B$ in $\mathbf{Sch}_B$ the cohomology sheaves 
  $H^i(\mathbf{L}g^\ast K) = 0$ for all $i < 0$, then the functor 
  \begin{align*}
     \mathbf{Sch}_B ^{\operatorname{op}} & \to 
    \operatorname{Set} \\
    g & \mapsto H^0(T,\mathbf{L}g^\ast K)
  \end{align*}
  is an affine algebraic space of finite presentation over $B$.
\end{proposition}

\begin{proof}
  This is contained in the statement of 
  \cite[\href{https://stacks.math.columbia.edu/tag/08JX}
  {Lemma 08JX}]{stacks-project}. 
\end{proof}

We will need a vanishing criterion of \cite{Lieblich}. 

\begin{proposition} \label{prop:pug_check}
  Let $E, F$ be $S$-perfect objects of $\op{D^b}(X)$ for $f : X \to S$ 
  a proper flat morphism of finite presentation between 
  locally Noetherian algebraic spaces. Then, 
  \begin{displaymath}
    \mathbf{R}f_{T \ast} \mathbf{R} \mathcal Hom(E_T, F_T)^{<0} = 0
  \end{displaymath}
  for any map $T \to S$ if and only if 
  \begin{displaymath}
    \operatorname{Ext}_{X_s}^i(E_s,F_s) = 0 
  \end{displaymath}
  for any $i < 0$ and any geometric point $s \in S(\overline{k})$. 
\end{proposition}

\begin{proof}
  This is a slight generalization of the statement of 
  \cite[Proposition 2.1.9]{Lieblich} 
  which proves the case $E \neq F$ but only states the results for $E=F$.
\end{proof}

\begin{proposition}\label{prop:pug_rep}
  Let $E \in \mathcal D^b_{\operatorname{pug}}(X)$ and $X$ be smooth and 
  projective. Assume $g^\ast E \cong E$ for any geometric point 
  $g \in G(\overline{k})$. Then $\widetilde{G}_E$ is representable by an affine group
  scheme. 
\end{proposition}

\begin{proof}
 We have 
 \begin{displaymath}
   \operatorname{Hom}((\sigma^\ast E)_T, (\pi^\ast E)_T) \cong 
   H^0(T, \mathbf{R}f_{T \ast} \mathbf{R} \mathcal Hom (
     (\sigma^\ast E)_T, (\pi^\ast E)_T )).
 \end{displaymath}
 From flat base change, we have 
 \begin{displaymath}
  \mathbf{R}f_{T \ast} \mathbf{R} \mathcal Hom (
    \sigma^\ast E, \pi^\ast E) \cong \mathbf{L}g^\ast \mathbf{R}f_\ast 
    \mathcal Hom (\sigma^\ast E, \pi^\ast E).
 \end{displaymath}
 To apply Proposition~\ref{prop:rep_h0}, we need to check that 
 \begin{displaymath}
    \mathbf{R}f_{T \ast} \mathbf{R} \mathcal Hom (
    (\sigma^\ast E)_T, (\pi^\ast E)_T )^{<0}  = 0.
 \end{displaymath} 
 Applying Proposition~\ref{prop:pug_check}, we reduce to checking 
 \begin{displaymath}
   \op{Ext}^i_{\bar{X}}(g^\ast E, E) = 0
 \end{displaymath}
 for all $g \in G(\bar{k})$ and $i < 0$. By assumption, $g^\ast E 
 \cong E$ so 
 \begin{displaymath}
  \op{Ext}^i_{\bar{X}}(g^\ast E, E) = \op{Ext}^i_{\bar{X}}(E, E)
 \end{displaymath}
 which vanishes for $i < 0$ since we assumed $E \in 
 \mathcal D^b_{\operatorname{pug}}(X)$. Thus $\widetilde{G}_E$ is an 
 algebraic space affine over $X$. Arguing as in the proof of 
 Proposition~\ref{prop:rep_GE_coh}, we conclude it is a 
 scheme. 
\end{proof}


\bibliographystyle{halpha-abbrv}
\bibliography{ExcCollToric}

\providecommand\noopsort[1]{}
\begin{thebibliography}{ELFST14}
\expandafter\ifx\csname url\endcsname\relax
  \def\url#1{\texttt{#1}}\fi
\expandafter\ifx\csname doi\endcsname\relax
  \def\doi#1{\burlalt{doi:#1}{http://dx.doi.org/#1}}\fi
\expandafter\ifx\csname urlprefix\endcsname\relax\def\urlprefix{URL }\fi
\expandafter\ifx\csname href\endcsname\relax
  \def\href#1#2{#2}\fi
\expandafter\ifx\csname burlalt\endcsname\relax
  \def\burlalt#1#2{\href{#2}{#1}}\fi

\bibitem[AB18]{AB}
A.~Auel and M.~Bernardara.
\newblock Semiorthogonal decompositions and birational geometry of del {P}ezzo
  surfaces over arbitrary fields.
\newblock {\em Proc. Lond. Math. Soc. (3)}, 117(1):1--64, 2018.
\newblock \doi{10.1112/plms.12119}.

\bibitem[BDLM24]{BDLM}
M.~R. Ballard, A.~Duncan, A.~Lamarche, and P.~K. McFaddin.
\newblock Separable algebras and coflasque resolutions.
\newblock {\em Advances in Mathematics}, 444:109596, 2024.
\newblock \doi{10.1016/j.aim.2024.109596}.

\bibitem[BB11]{BatBlu}
V.~Batyrev and M.~Blume.
\newblock The functor of toric varieties associated with {W}eyl chambers and
  {L}osev-{M}anin moduli spaces.
\newblock {\em Tohoku Math. J. (2)}, 63(4):581--604, 2011.
\newblock \doi{10.2748/tmj/1325886282}.

\bibitem[Bei78]{Beilinson}
A.~A. Beilinson.
\newblock Coherent sheaves on {${\bf P}^{n}$}\ and problems in linear algebra.
\newblock {\em Funktsional. Anal. i Prilozhen.}, 12(3):68--69, 1978.

\bibitem[BBD82]{Perverse_Sheaves}
A.~A. Be\u{\i}linson, J.~Bernstein, and P.~Deligne.
\newblock Faisceaux pervers.
\newblock In {\em Analysis and topology on singular spaces, {I} ({L}uminy,
  1981)}, volume 100 of {\em Ast\'{e}risque}, pages 5--171. Soc. Math. France,
  Paris, 1982.

\bibitem[BK89]{BK}
A.~I. Bondal and M.~M. Kapranov.
\newblock Representable functors, {S}erre functors, and reconstructions.
\newblock {\em Izv. Akad. Nauk SSSR Ser. Mat.}, 53(6):1183--1205, 1337, 1989.

\bibitem[BS17]{BS}
M.~Brown and I.~Shipman.
\newblock The {M}c{K}ay correspondence, tilting, and rationality.
\newblock {\em Michigan Math. J.}, 66(4):785--811, 2017.
\newblock \doi{10.1307/mmj/1501034511}.

\bibitem[CT20]{CT}
A.-M. Castravet and J.~Tevelev.
\newblock Derived category of moduli of pointed curves. {I}.
\newblock {\em Algebr. Geom.}, 7(6):722--757, 2020.
\newblock \doi{10.14231/ag-2020-026}.

\bibitem[Dun16]{Duncan}
A.~Duncan.
\newblock Twisted forms of toric varieties.
\newblock {\em Transform. Groups}, 21(3):763--802, 2016.
\newblock \doi{10.1007/s00031-016-9394-5}.

\bibitem[EL16]{ElaginLunts}
A.~Elagin and V.~Lunts.
\newblock On full exceptional collections of line bundles on del {P}ezzo
  surfaces.
\newblock {\em Mosc. Math. J.}, 16(4):691--709, 2016.
\newblock \doi{10.17323/1609-4514-2016-16-4-691-709}.

\bibitem[ELFST14]{ELFST}
E.~J. Elizondo, P.~Lima-Filho, F.~Sottile, and Z.~Teitler.
\newblock Arithmetic toric varieties.
\newblock {\em Math. Nachr.}, 287(2-3):216--241, 2014.
\newblock \doi{10.1002/mana.201200305}.

\bibitem[KMRT98]{BOI}
M.-A. Knus, A.~Merkurjev, M.~Rost, and J.-P. Tignol.
\newblock {\em The book of involutions}, volume~44 of {\em American
  Mathematical Society Colloquium Publications}.
\newblock American Mathematical Society, Providence, RI, 1998.

\bibitem[Kuz10]{Kuz4fold}
A.~Kuznetsov.
\newblock Derived categories of cubic fourfolds.
\newblock In {\em Cohomological and geometric approaches to rationality
  problems}, volume 282 of {\em Progr. Math.}, pages 219--243. Birkh\"{a}user
  Boston, Boston, MA, 2010.

\bibitem[Kuz16]{Kuzsurvey}
A.~Kuznetsov.
\newblock Derived categories view on rationality problems.
\newblock In {\em Rationality problems in algebraic geometry}, volume 2172 of
  {\em Lecture Notes in Math.}, pages 67--104. Springer, Cham, 2016.

\bibitem[Lie06]{Lieblich}
M.~Lieblich.
\newblock Moduli of complexes on a proper morphism.
\newblock {\em J. Algebraic Geom.}, 15(1):175--206, 2006.
\newblock \doi{10.1090/S1056-3911-05-00418-2}.

\bibitem[MT86]{MTsurvey}
Y.~I. Manin and M.~A. Tsfasman.
\newblock Rational varieties: algebra, geometry, arithmetic.
\newblock {\em Uspekhi Mat. Nauk}, 41(2(248)):43--94, 1986.

\bibitem[MP97]{MerkPan}
A.~S. Merkurjev and I.~A. Panin.
\newblock {$K$}-theory of algebraic tori and toric varieties.
\newblock {\em $K$-Theory}, 12(2):101--143, 1997.
\newblock \doi{10.1023/A:1007770500046}.

\bibitem[Pol11]{Polishchuk_unity}
A.~Polishchuk.
\newblock {$K$}-theoretic exceptional collections at roots of unity.
\newblock {\em J. K-Theory}, 7(1):169--201, 2011.
\newblock \doi{10.1017/is010004018jkt112}.

\bibitem[Ser03]{Skip}
J.-P. Serre.
\newblock Cohomological invariants, {W}itt invariants, and trace forms.
\newblock In {\em Cohomological invariants in {G}alois cohomology}, volume~28
  of {\em Univ. Lecture Ser.}, pages 1--100. Amer. Math. Soc., Providence, RI,
  2003.

\bibitem[Stacks24]{stacks-project}
{The Stacks Project Authors}.
\newblock \textit{Stacks Project}.
\newblock \url{https://stacks.math.columbia.edu}, 2024.

\bibitem[Via17]{Vial}
C.~Vial.
\newblock Exceptional collections, and the {N}{\'e}ron-{S}everi lattice for
  surfaces.
\newblock {\em Adv. Math.}, 305:895--934, 2017.
\newblock \doi{10.1016/j.aim.2016.10.012}.

\bibitem[Vis08]{vistoli_stack}
A.~Vistoli.
\newblock Notes on grothendieck topologies, fibered categories and descent
  theory.
\newblock Notes, October 2008.

\bibitem[VK84]{VosKly}
V.~E. Voskresenskii and A.~A. Klyachko.
\newblock Toric {F}ano varieties and systems of roots.
\newblock {\em Izv. Akad. Nauk SSSR Ser. Mat.}, 48(2):237--263, 1984.

\bibitem[Vos82]{Vos82Projective}
V.~E. Voskresenski{\u \i}.
\newblock Projective invariant {D}emazure models.
\newblock {\em Izv. Akad. Nauk SSSR Ser. Mat.}, 46(2):195--210, 431, 1982.

\bibitem[Vos71]{VoskyRatl}
V.~E. Voskresenski\u{\i}.
\newblock Rationality of certain algebraic tori.
\newblock {\em Izv. Akad. Nauk SSSR Ser. Mat.}, 35:1037--1046, 1971.

\bibitem[Vos98]{Vosky}
V.~E. Voskresenski\u{\i}.
\newblock {\em Algebraic groups and their birational invariants}, volume 179 of
  {\em Translations of Mathematical Monographs}.
\newblock American Mathematical Society, Providence, RI, 1998.
\newblock Translated from the Russian manuscript by Boris Kunyavski [Boris \`E.
  Kunyavski\u{\i}].

\bibitem[Zha19]{Zhang}
S.~Zhang.
\newblock Applications of toric systems on surfaces.
\newblock {\em Journal of Pure and Applied Algebra}, 223(3):1139--1160, 2019.
\newblock \doi{https://doi.org/10.1016/j.jpaa.2018.05.018}.

\end{thebibliography}

\end{document}